%
%
%
%
\documentclass{amsart}
\usepackage{graphicx}
\usepackage{xcolor}
\usepackage{amsmath,amsthm,amstext,amssymb,amsfonts,latexsym}

\newtheorem{thm}{Theorem}[section]
\newtheorem{lem}[thm]{Lemma}
\newtheorem{cor}[thm]{Corollary}

\theoremstyle{definition}

\newtheorem{cond}[thm]{Condition}

\theoremstyle{remark}
\newtheorem{remark}[thm]{Remark}

\newtheorem*{acknow}{Acknowledgments}

\numberwithin{equation}{section}

\allowdisplaybreaks[1]      



\begin{document}
	
	\title{Self-similar solutions of $\sigma_k^{\alpha}$-curvature flow}

	\author{Shanze Gao}
	\address{Department of Mathematical Sciences, Tsinghua University, Beijing
		100084, P.R. China}
	\email{gsz15@mails.tsinghua.edu.cn}

	\author{Hui Ma}
	\email{hma@math.tsinghua.edu.cn}
	\thanks{The authors were supported in part by NSFC grant No.~11271213 and No.~11671223.}

	\subjclass[2010]{Primary 53C44; Secondary 53C40 }	
	\date{}
	\keywords{$\sigma_k$ curvature, self-similar solution, non-homogeneous curvature function}
	
	\begin{abstract}
		In this paper, employing a new inequality, we show that under certain curvature pinching condition, the strictly convex closed smooth self-similar solution of  $\sigma_k^{\alpha}$-flow must be a round sphere. We also obtain a similar result for the solutions of $F=-\langle X, e_{n+1}\rangle \, (*)$ with a non-homogeneous function $F$. At last, we prove that if $F$ can be compared  with $\frac{(n-k+1)\sigma_{k-1}}{k\sigma_{k}}$, then a closed strictly $k$-convex solution of  $(*)$ must be a round sphere.
	\end{abstract}
	
	\maketitle

\section{Introduction}
\label{sec:Intro}	

	Let $X:M\rightarrow \mathbb{R}^{n+1}$ be a smooth embedding of a closed, orientable $n$-dimensional manifold with $n\geq2$.
	Choose an orthonormal frame in $\mathbb{R}^{n+1}$ along $M$ such that $\{e_1, e_2,\cdots, e_n\}$ are tangent to $M$ and  $e_{n+1}$ is  the inward-pointing unit normal vector of $M$. Under such a frame,
	let $A=\{h_{ij}\}$ denote the components of the second fundamental form of $X$, then the principal curvatures $\lambda_{1},\cdots,\lambda_{n}$ of $M$ are eigenvalues of the second fundamental form $A$.
	Define
	\begin{equation*}
		\sigma_k(A)=\frac{1}{k!}\delta
		\begin{pmatrix}
			i_{1}i_{2}\cdots i_{n}\\
			j_{1}j_{2}\cdots j_{n}
		\end{pmatrix}
		h_{i_{1}j_{1}}h_{i_{2}j_{2}}\cdots h_{i_{n}j_{n}},
	\end{equation*}
	where 
	$\delta\begin{pmatrix}
	i_{1}i_{2}\cdots i_{n}\\
	j_{1}j_{2}\cdots j_{n}
	\end{pmatrix}$
	is the generalized Kronecker symbol. We use the summation convention throughout this paper unless otherwise stated. 
	For convenience, we set $\sigma_0=1$ and $\sigma_k=0$ for $k> n$.
		
	In this paper, we consider a hypersurface $M$ which satisfies the following equation
	\begin{equation}\label{1-1}
	F(A(x))=-\langle X(x), e_{n+1}(x)\rangle, \, \text{for all}~x\in M,
	\end{equation}
	where $F(A)=f(\lambda)$ is a smooth function of principal curvatures and $\langle \,  , \, \rangle$ denotes the standard Euclidean metric in $\mathbb{R}^{n+1}$.
	
	This type of equation is important for curvature flow of the following type
	\begin{equation}\label{f}
	\tilde{X}_{t}=F(A)e_{n+1}.
	\end{equation}
	Actually, if $X$ is a solution of \eqref{1-1} and $F$ is homogeneous of degree $\beta$, then 
	$$ \tilde{X}(x,t)=((\beta+1)(T-t))^{\frac{1}{1+\beta}} X(x) $$
	gives rise to the solution of \eqref{f} up to a tangential diffeomorphism \cite{m}. So in the same spirit, we call the solutions of \eqref{1-1} self-similar solutions of \eqref{f}. Moreover, for $F=H$, the solution of \eqref{1-1} is usually called self-shrinker which describes the asymptotic behavior of mean curvature flow (see \cite{h90, c-m}). 
	Huisken proved the following theorem.
	
	\begin{thm}[\cite{h90}] \label{Thm1.1}
		If $M^n$, $n\geq2$,  is a closed hypersurface in $\mathbb{R}^{n+1}$, with non-negative mean curvature $H$ and satisfies the equation 
		\begin{equation*}
		H=-\langle X, e_{n+1}\rangle,
		\end{equation*}
		then $M^n$ is a round sphere of radius $\sqrt{n}$.
	\end{thm}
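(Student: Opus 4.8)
\medskip
\noindent\textbf{Proof proposal.} The plan is to read \eqref{1-1} with $F=H$ as a linear elliptic equation for $H$ and to extract from it, via weighted integral identities, that $M$ has parallel second fundamental form; classical rigidity then yields the round sphere. Write $f:=\tfrac12\abs{X}^2$ on $M$, let $d\mu$ be the area element, and introduce the drift Laplacian $\mathcal{L}u:=\Delta u-\langle X,\nabla u\rangle$. Since $\nabla f=X^{\top}$ (the tangential part of $X$), $\mathcal{L}$ is self-adjoint for the measure $e^{-f}d\mu$, so on the closed $M$ one has $\int_M(\mathcal{L}u)e^{-f}d\mu=0$ and $\int_M\langle\nabla u,\nabla v\rangle e^{-f}d\mu=-\int_M v\,(\mathcal{L}u)\,e^{-f}d\mu$. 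First I would differentiate the equation: using the Gauss--Weingarten formulas this gives $\nabla_iH=h_{ij}\langle X,e_j\rangle$ and $\nabla_i\nabla_jf=g_{ij}-Hh_{ij}$. Substituting into Simons' identity $\Delta h_{ij}=\nabla_i\nabla_jH+Hh_{ik}h_{kj}-\abs{A}^2h_{ij}$ and using Codazzi, the cubic terms cancel, leaving $\mathcal{L}h_{ij}=(1-\abs{A}^2)h_{ij}$, and hence
\begin{equation*}
\mathcal{L}H=(1-\abs{A}^2)H,\qquad \mathcal{L}\abs{A}^2=2\abs{\nabla A}^2+2\abs{A}^2(1-\abs{A}^2).
\end{equation*}
Since $H\ge0$ solves a linear elliptic equation on a closed connected manifold, the strong maximum principle forces $H>0$ everywhere or $H\equiv0$; the latter makes $\langle X,e_{n+1}\rangle\equiv0$, so $\nabla\abs{X}^2=2X$ on $M$, which fails at a maximum point of $\abs{X}^2$. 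Thus $H>0$, and in particular $\abs{A}^2>0$ everywhere.

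The heart of the argument is the following. Set $w:=\abs{A}^2/H^2$ (which is $\ge\tfrac1n$). From $2h_{ij}h_{ijk}=\nabla_k\abs{A}^2=H^2\nabla_kw+2wH\nabla_kH$ we get $h_{ij}h_{ijk}=\tfrac12H^2\nabla_kw+wH\nabla_kH$; squaring, summing in $k$, and applying Cauchy--Schwarz $(h_{ij}h_{ijk})^2\le\abs{A}^2\sum_{i,j}(h_{ijk})^2$ for each fixed $k$ gives the pointwise inequality
\begin{equation*}
\abs{\nabla A}^2-w\abs{\nabla H}^2\ \geq\ H\langle\nabla H,\nabla w\rangle+\frac{H^4}{4\abs{A}^2}\abs{\nabla w}^2 .
\end{equation*}
Now integrate against $e^{-f}d\mu$. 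Since $\nabla(H^2)=2H\nabla H$ and $\mathcal{L}(H^2)=2\abs{\nabla H}^2+2H^2(1-\abs{A}^2)$, integration by parts yields $\int_MH\langle\nabla H,\nabla w\rangle e^{-f}d\mu=-\tfrac12\int_Mw\,\mathcal{L}(H^2)e^{-f}d\mu=-\int_M\big(w\abs{\nabla H}^2+\abs{A}^2(1-\abs{A}^2)\big)e^{-f}d\mu$, using $wH^2=\abs{A}^2$; moreover $\int_M\big(\abs{\nabla A}^2+\abs{A}^2(1-\abs{A}^2)\big)e^{-f}d\mu=\tfrac12\int_M\mathcal{L}\abs{A}^2\,e^{-f}d\mu=0$. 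Feeding these into the integrated inequality, the $\abs{\nabla H}^2$ and $\abs{A}^2(1-\abs{A}^2)$ terms cancel completely, and there remains $\int_M\frac{H^4}{4\abs{A}^2}\abs{\nabla w}^2e^{-f}d\mu\le0$. Hence $w$ is constant and the Cauchy--Schwarz step above is an equality at every point.

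For the rigidity, equality in Cauchy--Schwarz forces $\nabla_kh_{ij}=\mu_kh_{ij}$ for some $1$-form $\mu$; tracing gives $\mu=\nabla\log H$, so $A/H$ is parallel. Wherever $\mu\ne0$, the Codazzi symmetry $\mu_kh_{ij}=\mu_jh_{ik}$ forces $\mathrm{rank}\,A\le1$ and hence $w=1$; since $w$ is constant this would give $\abs{A}^2\equiv H^2$, i.e. $\sigma_2\equiv0$, contradicting the positive definiteness of $A$ at a point of $M$ farthest from the origin. Therefore $\mu\equiv0$, i.e. $\nabla A\equiv0$: $M$ is a closed hypersurface of $\mathbb{R}^{n+1}$ with parallel second fundamental form, hence a round sphere $S^n(r_0)$ with some center $c$. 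Substituting into $H=-\langle X,e_{n+1}\rangle$ and using that $H\equiv n/r_0$ is constant forces $c=0$ and $r_0=\sqrt n$, so $M=S^n(\sqrt n)$.

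The Simons computation, the maximum principle, and the rigidity of closed hypersurfaces with $\nabla A\equiv0$ are routine; the step that takes real work — and the one I expect to be the main obstacle — is pinning down the precise pointwise (Kato-type) inequality in the middle paragraph for which, after one weighted integration by parts and the identity $\int_M\mathcal{L}\abs{A}^2\,e^{-f}d\mu=0$, all the unwanted gradient terms cancel and leave a remainder of a definite sign. A bare maximum-principle argument applied to $w=\abs{A}^2/H^2$ only produces the improved estimate $\abs{\nabla A}^2H^2\ge\abs{A}^2\abs{\nabla H}^2$ at the critical points of $w$, which is not enough to conclude on its own.
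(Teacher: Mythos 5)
Your argument is correct, but note that the paper does not prove Theorem \ref{Thm1.1} at all: it is quoted from Huisken \cite{h90}, and what you have written is essentially a faithful reconstruction of Huisken's original proof (in the normalization $H=-\langle X,e_{n+1}\rangle$, with Gaussian weight $e^{-\abs{X}^2/2}$). All the key steps check out: the drift identities $\mathcal{L}h_{ij}=(1-\abs{A}^2)h_{ij}$ and $\mathcal{L}\abs{A}^2=2\abs{\nabla A}^2+2\abs{A}^2(1-\abs{A}^2)$ are right with the paper's sign conventions; the elimination of $H\equiv 0$ via $\nabla\abs{X}^2=2X$ is sound; the pointwise Kato-type inequality for $w=\abs{A}^2/H^2$ follows from Cauchy--Schwarz applied to $h_{ij}h_{ijk}$ for each fixed $k$ exactly as you compute; and after the two weighted integrations by parts the gradient and $\abs{A}^2(1-\abs{A}^2)$ terms do cancel, leaving $\int_M\frac{H^4}{4\abs{A}^2}\abs{\nabla w}^2e^{-f}d\mu\le 0$ together with pointwise equality in Cauchy--Schwarz, whence $\nabla_k h_{ij}=\mu_k h_{ij}$ with $\mu=\nabla\log H$, and the Codazzi rank argument plus convexity at the farthest point kills the case $\mu\not\equiv 0$. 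It is worth contrasting this with what the paper itself does for its new theorems: there the authors run a pointwise maximum-principle argument on the quotient test function $G=\sigma_1^k/\sigma_k$ at its maximum, which requires $\alpha k>1$ (their TERM I carries the factor $k\alpha-1$) and therefore degenerates precisely in the case $F=H=\sigma_1$; your closing remark that a bare maximum principle on $w=\abs{A}^2/H^2$ is insufficient and that the weighted integral identity is the essential extra ingredient is exactly the right diagnosis of why Huisken's case requires the integral method rather than the paper's pointwise one. Two small points to tidy up: the strong maximum principle step should be phrased for the operator $\mathcal{L}-\lambda$ with $\lambda$ large (so the zeroth-order coefficient has a sign), and the dichotomy $H>0$ or $H\equiv0$ uses connectedness of $M$ (otherwise argue component by component).
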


	Similar to the case of mean curvature, the solution of \eqref{1-1} with $F=\sigma_n^{\alpha}$ also describes the asymptotic behavior of $\alpha$-Gauss curvature flow (see \cite{a00,a-g-l,g-n,k-l}).
	Very recently, for $F=\sigma_{n}^{\alpha}$, Brendle, Choi and Daskalopoulos proved that the solution of \eqref{1-1} is either a round sphere for $\alpha>\frac{1}{n+2}$ or an ellipsoid for $\alpha=\frac{1}{n+2}$ (see \cite{b-c-d,c-d}).
	
	In \cite{m}, McCoy considered the case which $F$ is a class of concave or convex homogeneous functions of principal curvature with degree $1$. Under certain pinching condition, he also obtained a result for higher degree homogeneous functions.
	
	For homogeneous functions with degree greater than $1$,  the convergence of $\alpha$-Gauss curvature flow is well-studied. For
	the flows \eqref{f} of convex hypersurfaces by speeds $F=H^{\alpha}$  \cite{schu}, $F=\sigma_{2}^{\alpha}$ \cite{a, a-s} and more general  
	$F$ with certain properties \cite{a-m}, 
	similar results are obtained under certain curvature pinching conditions.

	In this paper we first consider the self-similar solutions of $\sigma_k^{\alpha}$-flow for strictly convex hypersurfaces when $k\geq 2$ and $\alpha > \frac{1}{k}$ under a different type of curvature pinching condition. 
	
	Let $\lambda=(\lambda_1,\cdots, \lambda_n)$ denote the principal curvatures of $M$.
	$M$ is said to be strictly convex if $\lambda\in \Gamma_+=\{\mu\in\mathbb{R}^{n}|\mu_{1}>0,\mu_{2}>0,\cdots,\mu_{n}>0\}$ for any point in $M$. Denote
	\begin{equation*}
	\sigma_k(\lambda)=\sigma_{k}(\lambda(A))=
	\sum_{1\leq i_{1}\leq i_{2}\cdots\leq i_{k}\leq n}
	\lambda_{i_{1}}\lambda_{i_{2}}\cdots\lambda_{i_{k}}.
	\end{equation*}
	Let $\sigma_k(\lambda|i)$ denote
	the symmetric function $\sigma_k(\lambda)$ with $\lambda_i=0$ and $\sigma_k(\lambda| ij)$,  with $i\neq j$, denote
	the symmetric function $\sigma_k(\lambda)$ with $\lambda_i=\lambda_j=0$. The following two basic equalities are needed in our investigation of the $\sigma_k^{\alpha}$ self-similar solutions.
	$$\sigma_{k-1}(\lambda|i)=\frac{\partial \sigma_{k}(\lambda)}{\partial \lambda_{i}}, \quad \sigma_{k-2}(\lambda|ij)=\frac{\partial^{2} \sigma_{k}(\lambda)}{\partial \lambda_{i}\partial \lambda_{j}}.$$
	
	\begin{cond}\label{condn}
		Assume
		\begin{equation*}
		\frac{\sigma^{2}_{1}(\lambda)\sigma_{k-2}(\lambda|ij)}
		{(\alpha k-1)\sigma_{1}(\lambda)\sigma_{k-1}(\lambda|p)
			-(\alpha-1)k^{2}\sigma_{k}(\lambda)}\in[1, 1+\delta]
		\end{equation*}
		holds for all $1\leq p \leq n, 1\leq i<j \leq n$, where 
		\begin{equation}\label{eq:delta}
			\delta=\left\{
			\begin{aligned}
				&\frac{3}{n-1}, &\text{if}~k=2,\\
				&\frac{\sqrt{n^{2}+8n-8}+2-n}{2(n-1)}, &\text{if}~3\leq k\leq n-1.
			\end{aligned}\right.
		\end{equation}
	\end{cond}
	
	Our main result can be stated as follows.
	
	\begin{thm}\label{Thm1.4}
		Let $M$ be a closed strictly convex hypersurface in $\mathbb{R}^{n+1}$ with $n\geq 2$. If $F=\sigma_{k}^{\alpha}$ with $2\leq k\leq n-1$, $\alpha>\frac{1}{k}$ and Condition \ref{condn} holds, then the solution of \eqref{1-1} is a round sphere. 
	\end{thm}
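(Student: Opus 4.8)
\noindent\emph{Sketch of an approach.}
The plan is to run the integral-estimate scheme behind Huisken's Theorem \ref{Thm1.1}, as extended to fully nonlinear speeds by McCoy in \cite{m}, and to absorb the terms coming from the fact that $\sigma_k^{\alpha}$ with $\alpha>\tfrac1k$ is neither concave nor convex by means of the new pinching in Condition \ref{condn}. Write $u=-\langle X,e_{n+1}\rangle$ for the support function, so that \eqref{1-1} reads $F=\sigma_k^{\alpha}=u$, and set $v_j=\langle X,e_j\rangle=\nabla_j\big(\tfrac{\abs{X}^2}{2}\big)$. The Gauss and Weingarten relations give the two standard identities
\[
\nabla_i u=h_{ij}v_j,\qquad \nabla_i v_j=g_{ij}-u\,h_{ij}.
\]
Since $M$ is strictly convex, $\lambda\in\Gamma_+$, so $\sigma_k^{ij}=\partial\sigma_k/\partial h_{ij}$ is positive definite and the operator $L:=\dot F^{ij}\nabla_i\nabla_j$, with $\dot F^{ij}=\alpha\sigma_k^{\alpha-1}\sigma_k^{ij}$, is elliptic on $M$.

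First I would compute $Lu$ and $L\big(\tfrac{\abs{X}^2}{2}\big)$ in closed form, using the identities above together with $\sigma_k^{ij}h_{ij}=k\sigma_k$, $\sigma_k^{ij}g_{ij}=(n-k+1)\sigma_{k-1}$, $\sigma_k^{ij}(h^2)_{ij}=\sigma_1\sigma_k-(k+1)\sigma_{k+1}$ and the equation $F=u$; the result is a combination of quantities polynomial in the $\sigma_j$'s (hence, since $u=\sigma_k^{\alpha}$, a function of $\lambda$ alone) plus the single term $h_{ij}v_iv_j$, which can itself be turned into a function-of-$\lambda$ integral by one integration by parts against $\nabla\tfrac{\abs{X}^2}{2}$. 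Then comes the Simons-type step: differentiating $F=\sigma_k^{\alpha}$ twice produces $\sigma_k^{pq}\nabla_i\nabla_jh_{pq}$, the Hessian term $\ddot\sigma_k^{\,pq,rs}\nabla_ih_{pq}\nabla_jh_{rs}$, and $\alpha(\alpha-1)\sigma_k^{\alpha-2}\nabla_i\sigma_k\nabla_j\sigma_k$; inserting Simons' identity $\nabla_i\nabla_jh_{pq}=\nabla_p\nabla_qh_{ij}+(\text{curvature terms in }h)$, the Gauss equation $R_{ijkl}=h_{ik}h_{jl}-h_{il}h_{jk}$, the Codazzi equations, and the divergence-free identity $\nabla_i\sigma_k^{ij}=0$ (so that $\nabla_i\dot F^{ij}=\alpha(\alpha-1)\sigma_k^{\alpha-2}\sigma_k^{ij}\nabla_i\sigma_k$ is only first order), I would rewrite $LF$ as the sum of (i) a quadratic form $Q$ in $\nabla h$, (ii) an explicit function $\mathcal P(\lambda)$ of the principal curvatures, and (iii) a divergence.

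Since $F=u$ we have $LF=Lu$; integrating this identity over $M$ kills every divergence term, and combining it with the Minkowski-type relation $\int_M\big[(n-k+1)\sigma_{k-1}-ku\sigma_k\big]\,dM=0$ --- itself $\int_M\operatorname{div}(\sigma_k^{ij}v_j)\,dM=0$, used together with the relevant Newton--Maclaurin identities and the weight $\sigma_k^{\alpha-1}$ --- yields an identity of the form
\[
\int_M Q(\nabla h)\,dM+\int_M \mathcal P(\lambda)\,dM=0 .
\]
The ``new inequality'' of the abstract is then the statement that Condition \ref{condn} forces $\mathcal P(\lambda)\le 0$ on $\Gamma_+$, with equality exactly when $\lambda_1=\cdots=\lambda_n$, while the same pinching makes $Q$ a nonpositive quadratic form in $\nabla h$; indeed, the numerator $\sigma_1^2\,\sigma_{k-2}(\lambda|ij)$ and the denominator $(\alpha k-1)\sigma_1\sigma_{k-1}(\lambda|p)-(\alpha-1)k^2\sigma_k$ appearing in Condition \ref{condn} are precisely the building blocks of $\mathcal P$ and $Q$ (on a round sphere that ratio equals $\tfrac{n}{n-1}$, which lies in $[1,1+\delta]$ for the $\delta$ of \eqref{eq:delta}). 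Granting this, both integrands must vanish identically, so $M$ is totally umbilic, hence a round sphere.

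The main obstacle is the algebraic core: showing that ``the ratio in Condition \ref{condn} lies in $[1,1+\delta]$'', with the sharp, $n$- and $k$-dependent window \eqref{eq:delta}, really does fix the sign of $\mathcal P(\lambda)$ and force it to degenerate only at umbilic configurations, and simultaneously dominates the a priori indefinite gradient form $Q$ (indefinite precisely because $\alpha>\tfrac1k$). Considerable care is also required in the bookkeeping that isolates $\mathcal P$ cleanly from the Simons and curvature contributions and matches it to the exact expression in Condition \ref{condn}; by comparison, computing $Lu$, $L\big(\tfrac{\abs{X}^2}{2}\big)$, $LF$ and carrying out the integrations by parts is routine, if lengthy.
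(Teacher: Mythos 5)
Your plan is genuinely different from the paper's: the paper does not run a global integral scheme but a pointwise maximum-principle argument on the test function $G=\sigma_1^{k}/\sigma_k$, using the drift operator $\frac{\partial F}{\partial h_{ij}}\nabla_i\nabla_j-\langle X,e_l\rangle\nabla_l$. Applied to $G$, this splits into a zeroth-order part (TERM~I) and a quadratic form in $\nabla h$ (TERM~II), and the two hypotheses of the theorem enter in separate places: the ``new inequality'' (Lemma~\ref{Lem 2.1}, equivalently Corollary~\ref{Cor 2.3}) is an \emph{unconditional} inequality of symmetric functions on $\Gamma_+$ that makes TERM~I nonnegative for every $\alpha\ge 1/k$, with no pinching needed; Condition~\ref{condn} is used only to make TERM~II nonnegative, and only \emph{at the maximum point of $G$}, where $\nabla G=0$ gives $\nabla_l\sigma_k/\sigma_k=k\nabla_l\sigma_1/\sigma_1$. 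That substitution is exactly what collapses the indefinite gradient terms $(\nabla\sigma_k)^2$, $\nabla\sigma_1\cdot\nabla\sigma_k$, $(\nabla\sigma_1)^2$ into the single coefficient $B_p=(\alpha k-1)\sigma_1\sigma_{k-1}(\lambda|p)-(\alpha-1)k^2\sigma_k$ appearing in Condition~\ref{condn}. Your proposal misplaces both ingredients: you assign the new inequality the job of controlling $\mathcal P(\lambda)$ \emph{via} Condition~\ref{condn}, whereas in fact it is a free-standing inequality, and you expect Condition~\ref{condn} to control a \emph{global} gradient form $Q$, for which the critical-point simplification is unavailable, so the condition as stated does not match the quadratic form you would actually face.

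The more basic problem is that the proposal defers precisely the content of the proof. The sentence ``Granting this, both integrands must vanish identically'' grants (a) a sharp symmetric-function inequality with equality only at umbilic points (the paper's Lemma~\ref{Lem 2.1}, proved by induction using the root-interlacing identity $(m+1)\sigma_{m+1}(\lambda)=\sigma_1(\lambda)\sigma_m(\mu)$), and (b) a delicate completing-the-square estimate (the paper's Lemma~\ref{Lem 4.3}) showing that the $\delta$-window of \eqref{eq:delta} makes the form $\sum A_{ij}(h_{ijp}^2-h_{iip}h_{jjp})+\sum B_p(\nabla_p\sigma_1)^2$ nonnegative. Neither is supplied, and in the integral setting (b) is strictly harder than in the paper. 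There are also unaddressed structural issues in your setup: $\dot F^{ij}$ is not divergence-free for $\alpha\neq1$, and Huisken-type integral arguments for self-shrinkers require the Gaussian weight (equivalently the drift term $\langle X,\nabla\cdot\rangle$), which your unweighted integration of $LF=Lu$ ignores. As written, the proposal identifies the right difficulties but does not close them; to repair it along the paper's lines, introduce the test function $G=\sigma_1^k/\sigma_k$, establish Corollary~\ref{Cor 2.3} for the zeroth-order terms, and verify the gradient estimate at the maximum point of $G$ under Condition~\ref{condn}.
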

	
	\begin{remark}
		If $M$ is totally umbilical, then  Condition \ref{condn} leads to
		$$\frac{\sigma^{2}_{1}(\lambda)\sigma_{k-2}(\lambda|ij)}
		{(\alpha k-1)\sigma_{1}(\lambda)\sigma_{k-1}(\lambda|p)-
			(\alpha-1)k^{2}\sigma_{k}(\lambda)}=\frac{n}{n-1}\in[1, 1+\delta],$$
		which implies that Condition \ref{condn} is a pinching condition for principal curvatures.
	\end{remark}

	\begin{remark}	
		In the case of $F=\sigma_{2}$, 
		Condition \ref{condn} satisfies if $3\lambda_{\mathrm min}\geq \lambda_{\mathrm max}$, where $\lambda_{\mathrm min}$ and $\lambda_{\mathrm max}$ are the minimum principal curvature and maximum principal curvature, respectively. 
	\end{remark}
		
	\begin{remark} 
	Brendle, Choi and Daskalopoulos obtained better result for the case of $F=\sigma_n^{\alpha}$ in \cite{b-c-d,c-d}, so we omit this case in the statement of Theorem \ref{Thm1.4}.
	\end{remark}
	
	Somewhat surprisingly, the above argument enables us to discuss the solution of \eqref{1-1} with a non-homogeneous function $F=\sum_{l=1}^{n}a_{l}\sigma_{l}$, where $a_{l}$ are nonnegative constants with  $\sum_{l=2}^{n}a_{l}>0$. Thus we obtain the following result.
	\begin{thm}\label{Thm1.7}
		Let $M$ be a closed strictly convex hypersurface in $\mathbb{R}^{n+1}$ with $n\geq 2$. If $F=\sum_{l=1}^{n}a_{l}\sigma_{l}$ and the condition $\lambda_{min}\geq \Theta\lambda_{max}$
		holds, where $0<\Theta \leq 1$ is a constant depending on $n$,
		then, the solution of \eqref{1-1} is a round sphere. 
	\end{thm}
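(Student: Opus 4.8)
The plan is to reduce Theorem \ref{Thm1.7} to the machinery already built for Theorem \ref{Thm1.4}, exploiting the fact that a self-similar equation for $F=\sum_{l=1}^n a_l\sigma_l$ is a nonnegative combination of the individual self-similar equations $\sigma_l = -\langle X, e_{n+1}\rangle$. First I would set up the integral identities on the closed hypersurface: since $M$ is closed and strictly convex, one integrates the Minkowski-type formulas relating $\int \sigma_{l-1}$ and $\int \sigma_l \langle X, e_{n+1}\rangle$, together with the divergence structure of the Newton tensors $T_l$ (which are divergence-free because $M$ is a hypersurface in $\mathbb{R}^{n+1}$). Substituting the equation \eqref{1-1} with $F=\sum a_l\sigma_l$ into these integral identities produces an integral quantity $Q=\int_M (\text{something nonnegative under the pinching hypothesis})\,d\mu$ that must vanish, forcing pointwise umbilicity, i.e. $\lambda_1=\cdots=\lambda_n$ everywhere, whence $M$ is a round sphere.

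The key computation is the one already carried out implicitly in the proof of Theorem \ref{Thm1.4}: for a single term $\sigma_k$ with $\alpha=1$ one obtains, after commuting covariant derivatives (using the Codazzi equations, which hold for hypersurfaces in Euclidean space) and integrating by parts, an expression involving $\sigma^2_1(\lambda)\sigma_{k-2}(\lambda|ij)$ against $(\alpha k-1)\sigma_1\sigma_{k-1}(\lambda|p)-(\alpha-1)k^2\sigma_k$ evaluated at $\alpha=1$, which collapses to $\sigma_1^2\sigma_{k-2}(\lambda|ij)$ versus $(k-1)\sigma_1\sigma_{k-1}(\lambda|p)$. The point is that each summand $a_l\sigma_l$ with $l\ge 2$ contributes a term of definite sign provided the principal curvatures satisfy the pinching $\sigma_1^2\sigma_{l-2}(\lambda|ij)/\big((l-1)\sigma_1\sigma_{l-1}(\lambda|p)\big)$ lies in the allowable range $[1,1+\delta_l]$ with $\delta_l$ as in \eqref{eq:delta}, while the linear term $a_1\sigma_1$ contributes something automatically controlled (it behaves like the $k=1$ case, which is Huisken's Theorem \ref{Thm1.1}). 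I would then verify, by a continuity/compactness argument on the compact set $\{\lambda\in\Gamma_+ : \lambda_{\min}=\Theta\lambda_{\max}, |\lambda|=1\}$, that there exists $\Theta=\Theta(n)\in(0,1)$ such that $\lambda_{\min}\ge\Theta\lambda_{\max}$ simultaneously forces all of these finitely many pinching ratios (for $l=2,\dots,n$) into their respective intervals $[1,1+\delta_l]$; this is where the constant $\Theta$ depending only on $n$ comes from. Summing the $a_l$-weighted inequalities with $a_l\ge0$ and $\sum_{l\ge2}a_l>0$ then gives $Q\le 0$, and $Q\ge 0$ from the other grouping, so $Q=0$.

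The main obstacle I anticipate is the nonhomogeneity itself: when $F=\sum_l a_l\sigma_l$ the scaling self-similarity used to derive \eqref{f} from \eqref{1-1} fails, so one cannot lean on any parabolic-rescaling intuition and must extract everything from the elliptic equation \eqref{1-1} directly. Concretely, the cross terms that arise when one squares or differentiates $F=\sum a_l\sigma_l$ mix different levels $\sigma_l$ and $\sigma_{l'}$, and one must check that these mixed contributions either vanish upon integration (by the divergence-free property of the Newton tensors) or have the right sign under the pinching hypothesis. Making the single constant $\Theta(n)$ work uniformly across all the levels $l=2,\dots,n$ simultaneously — rather than a level-dependent constant — is the quantitative heart of the argument, and I would handle it by taking $\Theta(n)$ to be the maximum over $l$ of the threshold needed at each level, each of which is strictly less than $1$ by the totally-umbilical computation in the Remark after Theorem \ref{Thm1.4}, so the finite maximum is still strictly less than $1$.
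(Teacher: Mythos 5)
Your high-level skeleton is right in two respects: the second-order ``bad'' terms are linear in $F$, so one can treat each $a_l\sigma_l$ separately and take $\Theta(n)$ to be the worst (largest) of the level-by-level thresholds $\theta(l,n)$; and the conclusion does come from forcing a sign-definite quantity to vanish and then invoking umbilicity. But the mechanism you propose for producing that quantity is not the one that works, and as written it has a real gap. The paper's proof is a pointwise maximum-principle argument: one fixes a \emph{single} degree-zero test function $G=\sigma_1^n/\sigma_n$ (not $\sigma_1^l/\sigma_l$ for each $l$, whose maxima would sit at different points), writes the elliptic identity \eqref{2-3} for this $G$, and checks at the maximum point of $G$ that TERM I $\ge 0$ (using $\sum_i \frac{\partial f}{\partial\lambda_i}\lambda_i - f=\sum_{l\ge2}(l-1)a_l\sigma_l>0$ together with Corollary \ref{Cor 2.3}) and that each $\Phi(\sigma_l,g)\ge 0$ under the pinching. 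Your proposal instead integrates Minkowski-type identities and asserts the existence of a nonnegative integrand $Q$ without defining it. This route does not close: the identity \eqref{2-3} contains the drift term $\nabla_l G\,\langle X,e_l\rangle$, which does not integrate to zero against $d\mu$ (there is no natural weight here as there is for $F=H$), and the Minkowski identities by themselves only yield relations among $\int\sigma_k\sigma_l$ and $\int\sigma_{k-1}$ from which umbilicity does not follow. The appeal to ``divergence-free Newton tensors'' killing the cross terms is also moot — there are no cross terms, precisely because $\partial^2 F/\partial h\,\partial h$ is linear in the $a_l$.

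Two further inaccuracies. First, the pinching quantities you identify (Condition \ref{condn} at $\alpha=1$, i.e.\ $\sigma_1^2\sigma_{l-2}(\lambda|ij)$ against $(l-1)\sigma_1\sigma_{l-1}(\lambda|p)$) are those attached to the test function $\sigma_1^l/\sigma_l$; with the test function $\sigma_1^n/\sigma_n$ that one must actually use, the relevant ratios are $A_{ijp}/B_q$ with $A_{ijp}=\frac{\sigma_{l-1}(\lambda|i)}{\lambda_p\lambda_q}+\frac{n\sigma_{l-2}(\lambda|pq)}{\sigma_1}-\frac{\sigma_{l-2}(\lambda|pq)}{\lambda_i}$ and $B_p=\frac{n(n-1)}{\sigma_1^2}\sigma_{l-1}(\lambda|p)$, which still tend to $\frac{n}{n-1}$ at umbilics but are different expressions. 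Second, the $a_1\sigma_1$ term is \emph{not} automatically controlled: with $G=\sigma_1^n/\sigma_n$ the contribution $\Phi(\sigma_1,g)$ also requires a pinching threshold $\theta(1,n)<1$, so it must be included in the maximum defining $\Theta(n)$. Your compactness argument for the existence of $\Theta(n)$ is fine in spirit, but it has to be run on the correct ratios.
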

	
	\begin{remark}
		For $F=\sum_{l=1}^{n}a_{l}\sigma_{l}^{\alpha_{l}}$ with $\alpha_l >\frac{1}{l}$, under suitable pinching condition, the solution of \eqref{1-1} is also a round sphere. The proof is similar to the proof of Theorem \ref{Thm1.7}.
	\end{remark}
	
	For $F=\frac{\sigma_{k-1}}{\sigma_{k}}$ (which is used in \cite{g-l}), the solution of \eqref{1-1} can be characterized as follows when $M$ is strictly $k$-convex.  A hypersurface $M$ in $\mathbb{R}^{n+1}$ is strictly $k$-convex, if  $\lambda(x) \in\Gamma_{k}=\{\mu\in\mathbb{R}^{n}|\sigma_{1}(\mu)>0,\cdots,\sigma_{k}(\mu)>0\}$ for all $x\in M$. Obviously, $\Gamma_{+}=\Gamma_{n}\subset\Gamma_{n-1}\subset\cdots\subset\Gamma_{1}$. 
	
	\begin{thm}\label{inthm}
		Let $M$ be a closed strictly $k$-convex hypersurface in $\mathbb{R}^{n+1}$. Assuming $F\geq\frac{(n-k+1)\sigma_{k-1}}{k\sigma_{k}}$ or $F\leq\frac{(n-k+1)\sigma_{k-1}}{k\sigma_{k}}$, if there exists a solution of \eqref{1-1}, then $F=\frac{(n-k+1)\sigma_{k-1}}{k\sigma_{k}}$ and the solution must be a round sphere.
	\end{thm}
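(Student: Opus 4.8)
The plan is to combine the Hsiung--Minkowski formula with the generalized Heintze--Karcher inequality. Write $u:=-\langle X,e_{n+1}\rangle$, so that \eqref{1-1} reads $F=u$ on $M$; since $M$ is strictly $k$-convex, $\Gamma_{k}\subset\Gamma_{k-1}$ gives $\sigma_{k-1}(\lambda)>0$ and $\sigma_{k}(\lambda)>0$ everywhere, so the comparison function $\frac{(n-k+1)\sigma_{k-1}}{k\sigma_{k}}$ is well defined and positive. The starting point is the Hsiung--Minkowski identity
\begin{equation*}
\int_{M}u\,\sigma_{k}\,d\mu=\frac{n-k+1}{k}\int_{M}\sigma_{k-1}\,d\mu ,
\end{equation*}
which one obtains by integrating the divergence identity $\operatorname{div}_{M}\!\left(P_{k-1}(X^{T})\right)=(n-k+1)\sigma_{k-1}-k\,u\,\sigma_{k}$ over the closed hypersurface $M$, where $P_{k-1}$ is the $(k-1)$-st Newton transformation associated with the second fundamental form and $X^{T}$ is the tangential component of the position vector. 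Using $u=F$, this identity is exactly
\begin{equation*}
\int_{M}\sigma_{k}\left(F-\frac{(n-k+1)\sigma_{k-1}}{k\sigma_{k}}\right)d\mu=0 .
\end{equation*}

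Next I would invoke the hypothesis. If $F\ge\frac{(n-k+1)\sigma_{k-1}}{k\sigma_{k}}$ everywhere on $M$ (respectively $\le$), then, since $\sigma_{k}>0$, the integrand above has a fixed sign, and being of zero integral it must vanish identically. Hence $F\equiv\frac{(n-k+1)\sigma_{k-1}}{k\sigma_{k}}$ on $M$, which is the first assertion. In particular $u=F>0$ everywhere, i.e. $\langle X,\nu\rangle>0$ for the outward unit normal $\nu=-e_{n+1}$, so $M$ is star-shaped about the origin and the origin lies inside the domain $\Omega$ bounded by $M$. Combining the pointwise identity $u=\frac{n-k+1}{k}\frac{\sigma_{k-1}}{\sigma_{k}}$ with the divergence theorem $\int_{M}u\,d\mu=(n+1)\lvert\Omega\rvert$ gives
\begin{equation*}
\frac{n-k+1}{k}\int_{M}\frac{\sigma_{k-1}}{\sigma_{k}}\,d\mu=(n+1)\lvert\Omega\rvert .
\end{equation*}
On the other hand, the generalized Heintze--Karcher inequality for star-shaped strictly $k$-convex hypersurfaces states precisely that
\begin{equation*}
\frac{n-k+1}{k}\int_{M}\frac{\sigma_{k-1}}{\sigma_{k}}\,d\mu\ \ge\ (n+1)\lvert\Omega\rvert ,
\end{equation*}
with equality if and only if $M$ is a round sphere. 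Thus equality holds here, $M$ is a round sphere, and then $F=u$ forces the sphere to be centered at the origin.

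The derivations of the Hsiung--Minkowski identity and of the star-shapedness are routine, so the step needing the most care is the last one: one must make sure that the generalized Heintze--Karcher inequality, together with its rigidity statement, applies under the present hypotheses. Establishing star-shapedness first is exactly what makes this legitimate, since the Reilly-formula proof of that inequality goes through for star-shaped $k$-convex domains. If one preferred a self-contained argument, an alternative would be to run a Reilly-type identity directly on $\Omega$ (or an inverse-curvature-flow monotonicity argument), but quoting the inequality is the cleanest route.
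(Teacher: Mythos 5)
Your first step coincides exactly with the paper's: the Hsiung--Minkowski identity $k\int_{M}\sigma_{k}\langle X,e_{n+1}\rangle\,d\mu+(n-k+1)\int_{M}\sigma_{k-1}\,d\mu=0$ combined with \eqref{1-1} yields $\int_{M}k\sigma_{k}\bigl(-F+\frac{(n-k+1)\sigma_{k-1}}{k\sigma_{k}}\bigr)d\mu=0$, and the sign hypothesis together with $\sigma_{k}>0$ forces $F\equiv\frac{(n-k+1)\sigma_{k-1}}{k\sigma_{k}}$. Where you genuinely diverge is the rigidity step. The paper stays inside the Minkowski framework: it writes the same identity one level down, $0=\int_{M}(k-1)\sigma_{k-1}\bigl(-F+\frac{(n-k+2)\sigma_{k-2}}{(k-1)\sigma_{k-1}}\bigr)d\mu$, substitutes the value of $F$ just obtained, and notes that Newton--MacLaurin makes the integrand non-positive, so equality must hold pointwise and $M$ is umbilical. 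You instead combine $\int_{M}u\,d\mu=(n+1)\lvert\Omega\rvert$ with a generalized Heintze--Karcher inequality and its equality case. Your route is valid: the inequality you quote is true, since Newton--MacLaurin gives $\frac{n-k+1}{k}\frac{\sigma_{k-1}}{\sigma_{k}}\ge\frac{n}{\sigma_{1}}$ pointwise on $\Gamma_{k}$ and Ros's theorem gives $\int_{M}\frac{n}{\sigma_{1}}\,d\mu\ge(n+1)\lvert\Omega\rvert$ for embedded mean-convex hypersurfaces, with equality only for round spheres; note that this chain requires only embeddedness and mean-convexity, so your careful derivation of star-shapedness, while correct, is not actually needed for the quoted inequality to apply. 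The trade-off is clear: the paper's argument is entirely elementary and self-contained (two integral identities plus one algebraic inequality), whereas yours imports Heintze--Karcher/Ros, a strictly deeper tool resting on Reilly's formula; in exchange, your argument also handles $k=1$ directly, where the paper's level-$(k-1)$ identity degenerates, and it localizes the sphere at the origin.
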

	
	The paper is organized as follows.  In Section \ref{sec:Prelim}, we show a new inequality of symmetric functions, which plays an important role in the proof of our main result. Some basic equations are derived in Section \ref{sec:preparation}. In Section \ref{sec:sigma_k},  we use the maximum principle to establish our main result (Theorem \ref{Thm1.4}). We devote Section \ref{sec:F non-homogeneous} to a discussion on the solution of \eqref{1-1}  with a non-homogeneous function $F$. Finally the proof of Theorem \ref{inthm} is presented in Section \ref{sec:other}.
	
\section{A new inequality of symmetric functions}
\label{sec:Prelim}
	
	In this section we show a new inequality of symmetric functions, which may have its own interest.
	
	\begin{lem}\label{Lem 2.1}
		For any $2\leq k \leq n$ and $\lambda \in \Gamma_+$, we have
		$$	\frac{1}{k(k-1)}\sigma_{1}(\lambda)-\frac{k\sigma_k (\lambda)}{(k-1)\sigma_{k-1}(\lambda)}+ \frac{(k+1)\sigma_{k+1}(\lambda)}{k\sigma_{k}(\lambda)}\geq 0. $$
		Equality occurs if and only if $\lambda_1=\lambda_2=\cdots=\lambda_n$.
	\end{lem}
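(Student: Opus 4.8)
The plan is to prove the inequality by reducing it to a statement about the Newton quotients $p_j := \sigma_j(\lambda)/\sigma_{j-1}(\lambda)$ and then exploiting the Newton–Maclaurin inequalities together with a convexity/monotonicity argument. Writing $E_j := \sigma_j/\binom{n}{j}$ for the normalized elementary symmetric functions, recall the classical inequalities $E_{j-1}E_{j+1}\le E_j^2$ on $\Gamma_+$, equivalently $p_{j+1}/p_j \le \tfrac{(n-j+1)(j+1)}{(n-j)j}\cdot\frac{p_j}{p_j}$ after unwinding the binomial coefficients; more useful is the form that controls $\sigma_{k+1}\sigma_{k-1}$ from below is false, so instead I would rewrite the target quantity entirely in terms of the ratios
\[
 a := \frac{\sigma_k}{\sigma_{k-1}}, \qquad b := \frac{\sigma_{k+1}}{\sigma_k},
\]
so that the claim becomes $\frac{1}{k(k-1)}\sigma_1 - \frac{k}{k-1}a + \frac{k+1}{k}b \ge 0$. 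Since by Newton–Maclaurin $b \le a \le \cdots$, the only danger term is $-\frac{k}{k-1}a$, and the whole point must be that $\sigma_1$ is large enough to dominate it.

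The key step is therefore a lower bound of the shape $\sigma_1 \ge c_{n,k}\, a = c_{n,k}\,\sigma_k/\sigma_{k-1}$ with a constant good enough to absorb the coefficient $k$ and leave a nonnegative remainder that can be paired against $\frac{k+1}{k}b \ge 0$. The natural tool is the chain of Maclaurin inequalities $p_1 \ge p_2 \ge \cdots \ge p_k \ge p_{k+1}$ where $p_j = \sigma_j/\sigma_{j-1}$ (these hold on $\Gamma_+$ with equality iff all $\lambda_i$ are equal), which gives at once $\sigma_1 = p_1 \ge p_k = a$, hence $\frac{1}{k(k-1)}\sigma_1 - \frac{k}{k-1}a \ge \big(\tfrac{1}{k(k-1)} - \tfrac{k}{k-1}\big)a$; unfortunately $\tfrac{1}{k(k-1)} - \tfrac{k}{k-1} = \tfrac{1-k^2}{k(k-1)} = -\tfrac{k+1}{k} < 0$, so a cruder bound is not enough, and one must instead combine the $\sigma_1$ term with the $\sigma_{k+1}$ term simultaneously. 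The right identity to use is the Newton-type relation $\sigma_1 \sigma_k = (k+1)\sigma_{k+1} + \sum_i \lambda_i^2 \sigma_{k-1}(\lambda|i) + \text{(lower order)}$ — more precisely $\sigma_1\sigma_{k-1}(\lambda|i)$-type identities — from which $\sigma_1 a = \sigma_1 \sigma_k/\sigma_{k-1}$ can be expanded so that the $-\tfrac{k}{k-1}a$ deficit is matched term-by-term against $\tfrac{1}{k(k-1)}\sigma_1$ and $\tfrac{k+1}{k}b$. I expect the clean route is: multiply through by $k(k-1)\sigma_{k-1}\sigma_k > 0$, so the inequality becomes the polynomial statement
\[
 \sigma_1 \sigma_{k-1}\sigma_k - k^2 \sigma_k^2 + (k-1)(k+1)\sigma_{k+1}\sigma_{k-1} \ge 0,
\]
and then verify this last inequality directly using the identity $\sigma_1\sigma_k = (k+1)\sigma_{k+1} + \sum_i \lambda_i \sigma_k(\lambda|i)$ together with $\sigma_k(\lambda|i) = \sigma_k(\lambda) - \lambda_i\sigma_{k-1}(\lambda|i)$ and a sum-of-squares rearrangement of $\sum_{i,j}(\lambda_i-\lambda_j)^2(\cdots)$.

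The main obstacle, then, is the combinatorial bookkeeping in that final polynomial inequality: after substituting the Newton identities one is left with an expression like $\sum_i \lambda_i \sigma_k(\lambda|i)\sigma_{k-1} - k^2\sigma_k^2 + (k-1)(k+1)\sigma_{k+1}\sigma_{k-1} + (k+1)\sigma_{k+1}\sigma_{k-1}$, and one must show this is a nonnegative combination of squared differences $(\lambda_i-\lambda_j)^2$ weighted by manifestly nonnegative symmetric functions (of the variables other than $i,j$), which is exactly where the equality case $\lambda_1=\cdots=\lambda_n$ will become transparent. I would handle this by the standard device of writing $\sigma_m = \lambda_i\lambda_j\sigma_{m-2}(\lambda|ij) + (\lambda_i+\lambda_j)\sigma_{m-1}(\lambda|ij) + \sigma_m(\lambda|ij)$ for $m \in \{k-1,k,k+1\}$, fixing a pair $i\neq j$, collecting the coefficient of $(\lambda_i-\lambda_j)^2$, and checking it is a nonnegative polynomial in the remaining curvatures using only that $\lambda\in\Gamma_+$; summing over pairs and invoking the equality discussion of the Newton–Maclaurin inequalities then finishes the proof.
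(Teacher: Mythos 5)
Your reduction of the lemma to the polynomial inequality
\begin{equation*}
\sigma_1\sigma_{k-1}\sigma_k-k^2\sigma_k^2+(k^2-1)\sigma_{k+1}\sigma_{k-1}\;\ge\;0
\qquad\text{on }\Gamma_+
\end{equation*}
is correct (multiplying by $k(k-1)\sigma_{k-1}\sigma_k>0$ is legitimate, and one checks the expression vanishes at $\lambda_1=\cdots=\lambda_n$). But the decisive step of your argument --- that this cubic-in-$\sigma$ expression admits a decomposition $\sum_{i<j}(\lambda_i-\lambda_j)^2C_{ij}(\lambda)$ with each $C_{ij}$ manifestly nonnegative on $\Gamma_+$ --- is only announced, never carried out, and it is precisely the entire difficulty of the lemma. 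Such Schur-like decompositions do exist for some symmetric-function inequalities (e.g.\ for $k=2$ the expression equals $S_1S_3-S_2^2=\sum_{i<j}\lambda_i\lambda_j(\lambda_i-\lambda_j)^2$, which is the computation the paper does), but there is no a priori reason the coefficient of $(\lambda_i-\lambda_j)^2$ obtained by your expansion scheme is a nonnegative combination of symmetric functions of the remaining variables for all $2\le k\le n$, and you give no evidence for it. In addition, one of the two Newton-type identities you propose to substitute is wrong: $\sum_i\lambda_i\sigma_k(\lambda|i)=(k+1)\sigma_{k+1}$, so the claimed relation $\sigma_1\sigma_k=(k+1)\sigma_{k+1}+\sum_i\lambda_i\sigma_k(\lambda|i)$ would force $\sigma_1\sigma_k=2(k+1)\sigma_{k+1}$, which is false. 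The correct identity is $\sigma_1\sigma_k=(k+1)\sigma_{k+1}+\sum_i\lambda_i^2\sigma_{k-1}(\lambda|i)$, exactly and with no lower-order remainder. As written, your "combinatorial bookkeeping" would therefore start from a false premise, and even with the corrected identity the nonnegativity of the resulting pair coefficients remains unproved.

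For comparison, the paper avoids this expansion entirely. It verifies the case $k=2$ by the explicit power-sum computation above, and then proceeds by induction on $k$: if $\mu=(\mu_1,\dots,\mu_{n-1})$ denotes the (positive, by Rolle's theorem) roots of the derivative of $\prod_i(1-\lambda_ix)$, then $(m+1)\sigma_{m+1}(\lambda)=\sigma_1(\lambda)\sigma_m(\mu)$, which converts the quotient $(k+1)\sigma_{k+1}(\lambda)/(k\sigma_k(\lambda))$ into $\sigma_k(\mu)/\sigma_{k-1}(\mu)$ and lets one apply the inductive hypothesis to $\mu$, finishing with a telescoping sum. If you want to salvage your direct route, you must actually exhibit the coefficients $C_{ij}$ and prove their nonnegativity for every $k$; until then the proof is incomplete at its central step.
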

	
	\begin{proof}
		Let $S_k (\lambda)$ denote the power sum of $\lambda$ defined by $S_k (\lambda)=\sum_{i=1}^n \lambda_i^k$.
		Then $2\sigma_2=S_1^2-S_2$ and $3\sigma_3=\frac{1}{2}S_1^3-\frac{3}{2}S_1 S_2+S_3$. Thus for $k=2$, we have
		\begin{align*}
			&\quad\sigma_{1}-4\frac{\sigma_2}{\sigma_1}+3\frac{\sigma_{3}}{\sigma_{2}}\\
			&=\frac{1}{\sigma_1 \sigma_2} (\sigma_{1}^2\sigma_{2}-4\sigma_{2}^2+3\sigma_{1}\sigma_{3})\\
			&=\frac{1}{\sigma_1 \sigma_2} \big( \frac{1}{2}S_{1}^2(S_{1}^2-S_{2})-(S_{1}^2-S_{2})^2+S_{1}(\frac{1}{2}S_{1}^3-\frac{3}{2}S_{1}S_{2}+S_{3}) \big)\\
			&=\frac{1}{\sigma_1 \sigma_2}(S_{1}S_{3}-S_{2}^2)\\
			&=\frac{1}{\sigma_1 \sigma_2}\big( \sum_{i<j}\lambda_i\lambda_j(\lambda_i-\lambda_j)^2 \big)\geq 0
		\end{align*}
		and equality occurs if and only if $\lambda_1=\lambda_2=\cdots=\lambda_n$.
		
		We complete the proof by induction for $k$ and assume the lemma is true for $\{2,3,\cdots,k-1\}$.
		Let
		\begin{equation*}
			f(x)=\prod_{i=1}^{n} (1-\lambda_{i}x)=\sum_{m=0}^{n}(-1)^{m}\sigma_{m}(\lambda)x^{m}.
		\end{equation*}
		Then
		\begin{equation*}
			\frac{d}{dx}f(x)=\sum_{m=1}^{n}(-1)^{m}m\sigma_{m} (\lambda)x^{m-1}.
		\end{equation*}
		On the other hand, since $\frac{d}{dx}f(x)$ is a polynomial of degree $n-1$, by Rolle's theorem, if all roots of a polynomial $f(x)$ are real and positive, then the same is true for its derivative. This leads to 
		\begin{equation*}
			\frac{d}{dx}f(x)=-\sigma_{1}(\lambda)\prod_{i=1}^{n-1} (1-\mu_{i}x)=-\sigma_{1}(\lambda)\sum_{l=0}^{n-1}(-1)^{l}\sigma_{l}(\mu)x^{l}.
		\end{equation*}
		By comparing the above two expressions, we conclude that
		\begin{equation}\label{induction}
			(m+1)\sigma_{m+1} (\lambda)=\sigma_{1}(\lambda)\sigma_{m}(\mu) \quad \text{for}\quad 0\leq m\leq n-1.
		\end{equation}
		Thus, for $2\leq k\leq n-1$, we obtain
		\begin{align*}
			\frac{(k+1)\sigma_{k+1}(\lambda)}{k\sigma_{k}(\lambda)}
			&=\frac{\sigma_{k}(\mu)}{\sigma_{k-1}(\mu)}\\
			&\geq \frac{k-1}{k}\left(\frac{(k-1)\sigma_{k-1}(\mu)}{(k-2)\sigma_{k-2}(\mu)}-\frac{\sigma_{1}(\mu)}{(k-1)(k-2)}\right)\\
			&=\frac{k-1}{k}\left(\frac{k\sigma_{k}(\lambda)}{(k-2)\sigma_{k-1}(\lambda)}-\frac{2\sigma_{2}(\lambda)}{(k-1)(k-2)\sigma_{1}(\lambda)}\right)\\
			&=\frac{(k-1)^{2}}{k(k-2)}\frac{k\sigma_{k}(\lambda)}{(k-1)\sigma_{k-1}(\lambda)}-\frac{2\sigma_{2}(\lambda)}{k(k-2)\sigma_{1}(\lambda)}\\
			&=\frac{k\sigma_{k}(\lambda)}{(k-1)\sigma_{k-1}(\lambda)}+\frac{1}{k(k-2)}\left(\frac{k\sigma_{k}(\lambda)}{(k-1)\sigma_{k-1}(\lambda)}-\frac{2\sigma_{2}(\lambda)}{\sigma_{1}(\lambda)}\right)\\
			&=\frac{k\sigma_{k}(\lambda)}{(k-1)\sigma_{k-1}(\lambda)}+\frac{1}{k(k-2)}\sum_{i=2}^{k-1}\left(\frac{(i+1)\sigma_{i+1}(\lambda)}{i\sigma_{i}(\lambda)}-\frac{i\sigma_{i}(\lambda)}{(i-1)\sigma_{i-1}(\lambda)}\right)\\
			&\geq \frac{k\sigma_{k}(\lambda)}{(k-1)\sigma_{k-1}(\lambda)}-\frac{1}{k(k-2)}\sum_{i=2}^{k-1}\frac{\sigma_{1}(\lambda)}{i(i-1)}\\
			&=\frac{k\sigma_{k}(\lambda)}{(k-1)\sigma_{k-1}(\lambda)}-\frac{1}{k(k-1)}\sigma_{1}(\lambda).
		\end{align*}
		For $k=n$, since $\sigma_{n+1}=0$, it is confirmed by the Newton-MacLaurin inequalities.
		We finish the proof by noticing all equalities occur if and only if $\lambda_1=\lambda_2=\cdots=\lambda_n$.
	\end{proof}

	\begin{remark}
		The condition $\lambda\in \Gamma_{+}$ seems necessary because for $k=2,n=3$ and $\lambda=(-1,3,3)$, we have $\sigma_{1}(\lambda)>0$ and $\sigma_{2}(\lambda)>0$ but
		$$\sigma_{1}-4\frac{\sigma_2}{\sigma_1}+3\frac{\sigma_{3}}{\sigma_{2}}=\frac{1}{\sigma_1 \sigma_2}\big( \sum_{i<j}\lambda_i\lambda_j(\lambda_i-\lambda_j)^2 \big)<0.$$
	\end{remark}
	
	The following corollary of Lemma 2.1 will be used in Section \ref{sec:sigma_k}.
	\begin{cor}\label{Cor 2.3}
		For any $2\leq k \leq n$ and $\lambda \in \Gamma_+$, we have
		$$	(k-1)\sigma_{1}(\lambda)-2k\frac{\sigma_2 (\lambda)}{\sigma_1 (\lambda)}+(k+1) \frac{\sigma_{k+1}(\lambda)}{\sigma_{k}(\lambda)}\geq 0. $$
		Equality occurs if and only if $\lambda_1=\lambda_2=\cdots=\lambda_n$.
	\end{cor}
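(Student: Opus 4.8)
The plan is to derive Corollary~\ref{Cor 2.3} from Lemma~\ref{Lem 2.1} by a short telescoping argument. For $1\le j\le n$ write $Q_j:=\dfrac{(j+1)\sigma_{j+1}(\lambda)}{j\,\sigma_j(\lambda)}$, using the convention $\sigma_{n+1}=0$ so that $Q_n=0$ is permitted. Observe that $Q_{j-1}=\dfrac{j\,\sigma_j(\lambda)}{(j-1)\sigma_{j-1}(\lambda)}$, so Lemma~\ref{Lem 2.1} applied with index $j$ is exactly the statement
\[
Q_j-Q_{j-1}\ \ge\ -\frac{\sigma_1(\lambda)}{j(j-1)},\qquad 2\le j\le n,
\]
with equality if and only if $\lambda_1=\cdots=\lambda_n$.

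First I would sum this inequality over $j=2,\dots,k$ (legitimate since $k\le n$). The left-hand side telescopes to $Q_k-Q_1$, while $\sum_{j=2}^{k}\frac{1}{j(j-1)}=\sum_{j=2}^{k}\bigl(\frac{1}{j-1}-\frac1j\bigr)=\frac{k-1}{k}$, giving
\[
Q_k-Q_1\ \ge\ -\frac{(k-1)\,\sigma_1(\lambda)}{k}.
\]
Writing out $Q_k=\dfrac{(k+1)\sigma_{k+1}(\lambda)}{k\,\sigma_k(\lambda)}$ and $Q_1=\dfrac{2\sigma_2(\lambda)}{\sigma_1(\lambda)}$ and multiplying by $k>0$ yields precisely
\[
(k-1)\sigma_1(\lambda)-2k\,\frac{\sigma_2(\lambda)}{\sigma_1(\lambda)}+(k+1)\,\frac{\sigma_{k+1}(\lambda)}{\sigma_k(\lambda)}\ \ge\ 0 ,
\]
the asserted inequality. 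For the equality case: if all $\lambda_i$ coincide then every summand above is an equality, so equality holds; conversely, equality in the summed inequality forces equality in each term, in particular in the instance $j=2$ of Lemma~\ref{Lem 2.1}, and that alone forces $\lambda_1=\cdots=\lambda_n$.

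I do not anticipate a genuine obstacle: all the content is in Lemma~\ref{Lem 2.1}, and what remains is this rearrangement together with the elementary identity $\sum_{j=2}^{k}\frac{1}{j(j-1)}=\frac{k-1}{k}$. The only thing to watch is the index bookkeeping — correctly matching the ``boundary'' quantities $Q_k$ and $Q_1$ with the $\sigma_{k+1}/\sigma_k$ and $\sigma_2/\sigma_1$ terms, and checking that the endpoint case $k=n$ is covered (it is, with $\sigma_{n+1}=0$).
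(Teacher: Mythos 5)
Your proof is correct and is essentially the paper's own argument: the paper likewise writes $\frac{(k+1)\sigma_{k+1}}{k\sigma_k}-\frac{2\sigma_2}{\sigma_1}$ as the telescoping sum $\sum_{i=2}^{k}\bigl(\frac{(i+1)\sigma_{i+1}}{i\sigma_i}-\frac{i\sigma_i}{(i-1)\sigma_{i-1}}\bigr)$, bounds each summand below by $-\frac{\sigma_1}{i(i-1)}$ via Lemma~\ref{Lem 2.1}, and uses $\sum_{i=2}^{k}\frac{1}{i(i-1)}=1-\frac1k$. The index bookkeeping and the equality discussion in your write-up both check out.
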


	\begin{proof}
		Notice
		\begin{align*}
			\frac{(k+1)\sigma_{k+1}(\lambda)}{k\sigma_{k}(\lambda)}-\frac{2\sigma_{2}(\lambda)}{\sigma_{1}(\lambda)}&=\sum_{i=2}^{k}\left(\frac{(i+1)\sigma_{i+1}(\lambda)}{i\sigma_{i}(\lambda)}-\frac{i\sigma_{i}(\lambda)}{(i-1)\sigma_{i-1}(\lambda)}\right)\\
			&\geq - \sum_{i=2}^{k}\frac{\sigma_{1}(\lambda)}{i(i-1)}\\
			&=-(1-\frac{1}{k})\sigma_{1}(\lambda),
		\end{align*}
		where the inequality follows from Lemma \ref{Lem 2.1}.
	\end{proof}
	
	To finish this section, we list one well-known result (See for example \cite{a94} and \cite{Gerhardt}).

	\begin{lem}\label{Lem 2.2}
		If $W=(w_{ij})$ is a symmetric real matrix and $\lambda_{m}=\lambda_{m}(W)$ is one of its eigenvalues ($m=1,\cdots,n$).
		If $f=f(\lambda)$ is a function on $\mathbb{R}^{n}$ and $F=F(W)=f(\lambda(W))$, then for any real symmetric matrix $B=(b_{ij})$, we have the following formulae:
		\begin{itemize}
			\item[(i)]
			$\displaystyle\frac{\partial F}{\partial w_{ij}}b_{ij}=\frac{\partial f}{\partial \lambda_{p}}b_{pp},$
			\item[(ii)]
			$\displaystyle\frac{\partial^{2} F}{\partial w_{ij}\partial w_{st}}b_{ij}b_{st}=\frac{\partial^{2} f}{\partial \lambda_{p}\partial \lambda_{q}}b_{pp}b_{qq}+2\sum_{p<q}\frac{\frac{\partial f}{\partial \lambda_{p}}-\frac{\partial f}{\partial \lambda_{q}}}{\lambda_{p}-\lambda_{q}}b_{pq}b_{qp}.$
		\end{itemize}
	\end{lem}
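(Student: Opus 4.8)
The plan is to exploit the orthogonal invariance of $F$ to reduce to the case of a diagonal $W$, and then read off both identities from first- and second-order eigenvalue perturbation theory along the line $t\mapsto W+tB$.

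First I would record the reduction. Since $f$ is symmetric in its arguments (the implicit hypothesis that makes $F$ a well-defined $C^{2}$ function near $W$), one has $F(O^{T}WO)=F(W)$ for every $O\in O(n)$. Writing $W=O^{T}\Lambda O$ with $\Lambda=\mathrm{diag}(\lambda_{1},\dots,\lambda_{n})$ and replacing $B$ by $OBO^{T}$ carries both sides of (i) and (ii) to the corresponding expressions at $\Lambda$; here the labels $p,q$ and the entries $b_{pp},b_{pq}$ refer to the eigenbasis of $W$. Hence it suffices to prove the formulae when $W=\Lambda$, for every real symmetric $B$.

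Next I would set $W(t)=\Lambda+tB$, let $\lambda_{m}(t)$ be its eigenvalues with $\lambda_{m}(0)=\lambda_{m}$, and put $g(t):=F(W(t))=f(\lambda_{1}(t),\dots,\lambda_{n}(t))$. Differentiating in the $w_{ij}$-variables gives $g'(0)=\frac{\partial F}{\partial w_{ij}}b_{ij}$ and $g''(0)=\frac{\partial^{2}F}{\partial w_{ij}\partial w_{st}}b_{ij}b_{st}$. When the $\lambda_{m}$ are pairwise distinct, standard (Rayleigh--Schr\"{o}dinger) perturbation theory gives $\dot\lambda_{m}(0)=b_{mm}$ and $\ddot\lambda_{m}(0)=2\sum_{q\ne m}\frac{b_{mq}b_{qm}}{\lambda_{m}-\lambda_{q}}$. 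The chain rule then yields $g'(0)=\frac{\partial f}{\partial\lambda_{p}}b_{pp}$, which is (i), and
\begin{align*}
g''(0)&=\frac{\partial^{2}f}{\partial\lambda_{p}\partial\lambda_{q}}b_{pp}b_{qq}+\sum_{p}\frac{\partial f}{\partial\lambda_{p}}\,\ddot\lambda_{p}(0)\\
&=\frac{\partial^{2}f}{\partial\lambda_{p}\partial\lambda_{q}}b_{pp}b_{qq}+2\sum_{p}\sum_{q\ne p}\frac{\partial f}{\partial\lambda_{p}}\frac{b_{pq}b_{qp}}{\lambda_{p}-\lambda_{q}}.
\end{align*}
Symmetrizing the last double sum in $p\leftrightarrow q$ collapses it to $2\sum_{p<q}\frac{\frac{\partial f}{\partial\lambda_{p}}-\frac{\partial f}{\partial\lambda_{q}}}{\lambda_{p}-\lambda_{q}}\,b_{pq}b_{qp}$, which is (ii).

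The only delicate point is removing the assumption that the spectrum is simple, and this is where I expect the real work to lie. I would argue by continuity: the maps $W\mapsto\partial F/\partial w_{ij}(W)$ and $W\mapsto\partial^{2}F/\partial w_{ij}\partial w_{st}(W)$ are continuous because $f$ is $C^{2}$, and the right-hand sides of (i) and (ii) are continuous in $\lambda$ as well --- the difference quotient $\bigl(\tfrac{\partial f}{\partial\lambda_{p}}-\tfrac{\partial f}{\partial\lambda_{q}}\bigr)/(\lambda_{p}-\lambda_{q})$ extends continuously to $\tfrac{\partial^{2}f}{\partial\lambda_{p}^{2}}$ across $\{\lambda_{p}=\lambda_{q}\}$ precisely because symmetry of $f$ forces $\partial f/\partial\lambda_{p}=\partial f/\partial\lambda_{q}$ there. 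Since matrices with simple spectrum are dense among symmetric matrices, the identities, already verified there, extend to all $W$; the one remaining technical check is that $g$ is genuinely twice differentiable with the stated second derivative in the simple-spectrum case. Alternatively, one may simply invoke the classical references \cite{a94,Gerhardt}, where the formulae appear in exactly this form.
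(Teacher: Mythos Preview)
The paper does not actually prove this lemma: it is stated as a well-known result with citations to \cite{a94} and \cite{Gerhardt}, and no argument is given. So there is no ``paper's own proof'' to compare against; your sketch is in fact more than the paper provides.

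Your approach --- reduce to diagonal $W$ by orthogonal invariance, use first- and second-order eigenvalue perturbation along $t\mapsto\Lambda+tB$, then pass to the closure by density of simple spectra --- is exactly the standard one found in the cited references, and the computation for simple spectrum is correct. One small correction: the continuous extension of the difference quotient at $\lambda_{p}=\lambda_{q}$ is not $\partial^{2}f/\partial\lambda_{p}^{2}$ but rather $\partial^{2}f/\partial\lambda_{p}^{2}-\partial^{2}f/\partial\lambda_{p}\partial\lambda_{q}$ (expand $\partial f/\partial\lambda_{p}-\partial f/\partial\lambda_{q}$ using the symmetry $f(\dots,\lambda_{p},\dots,\lambda_{q},\dots)=f(\dots,\lambda_{q},\dots,\lambda_{p},\dots)$). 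This does not affect the validity of the continuity argument, since what one really needs is that the full right-hand side of (ii), summed over all $p,q$, is invariant under change of orthonormal eigenbasis and continuous in $W$; that is what the references establish.
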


	\begin{remark}
		In the above lemma, $\frac{\frac{\partial f}{\partial \lambda_{p}}-\frac{\partial f}{\partial \lambda_{q}}}{\lambda_{p}-\lambda_{q}}$ is interpreted as a limit if $\lambda_{p}=\lambda_{q}$.
	\end{remark}

\section{Equations of the test function}
\label{sec:preparation}
	
	In this section, we will obtain some useful equations by direct computation from the following equation
	\begin{equation}\label{2-0}
	F=-\langle X, e_{n+1}\rangle.
	\end{equation}
	
	Differentiating (\ref{2-0}) gives
	\begin{equation*}
	\nabla_{j}F=h_{jl}\langle X, e_l \rangle,
	\end{equation*}
	and
	\begin{equation}\label{2-1}
	\nabla_{i}\nabla_{j}F=h_{jli}\langle X, e_l \rangle+h_{ij}-h_{jl}h_{li}F.
	\end{equation}
	
	Then, we obtain
	\begin{equation}\label{2-2}
	\frac{\partial F}{\partial h_{ij}}\nabla_{i}\nabla_{j}F
	=\nabla_{l}F\langle X, e_l \rangle+\frac{\partial F}{\partial h_{ij}}h_{ij}-\frac{\partial F}{\partial h_{ij}}h_{jl}h_{li}F.
	\end{equation}
	
	Let $G=G(A)=g(\lambda(A))$ be a homogeneous function, called the test function in this paper. By directly calculation, we obtain
\begin{equation*}
\nabla_{i}\nabla_{j}G=\nabla_{i}\left( \frac{\partial G}{\partial h_{pq}}h_{pqj}\right)
	=\frac{\partial^2 G}{\partial h_{pq}\partial h_{st}}h_{pqj}h_{sti}
	+\frac{\partial G}{\partial h_{pq}}h_{pqji}.
	\end{equation*} 
	By Codazzi equation and Ricci identity, we obtain
	\begin{equation*}
	h_{pqji}=h_{pjqi}=h_{pjiq}+h_{mj}R_{mpqi}+h_{pm}R_{mjqi}.
	\end{equation*}
	Furthermore, using Gauss equation gives rise to
	\begin{equation*}
	h_{pqji}=h_{ijpq}+h_{mj}(h_{mq}h_{pi}-h_{mi}h_{pq})
	+h_{pm}(h_{mq}h_{ji}-h_{mi}h_{jq}).
	\end{equation*}
	Then, we have
	\begin{align*}
		\frac{\partial F}{\partial h_{ij} } \nabla_{i}\nabla_{j}G
		&=\frac{\partial F}{\partial h_{ij}}\frac{\partial^2 G}{\partial h_{pq}\partial h_{st}}h_{pqj}h_{sti}+\frac{\partial F}{\partial h_{ij}}\frac{\partial G}{\partial h_{pq}}h_{ijpq}\\
		&~+\frac{\partial F}{\partial h_{ij}}\frac{\partial G}{\partial h_{pq}}(h_{mj}(h_{mq}h_{pi}-h_{mi}h_{pq})+h_{pm}(h_{mq}h_{ji}-h_{mi}h_{jq}))\\
		&=\frac{\partial F}{\partial h_{ij}}\frac{\partial^2 G}{\partial h_{pq}\partial h_{st}}h_{pqj}h_{sti}+\frac{\partial G}{\partial h_{pq}}\left(\nabla_{p}\nabla_{q}
		F-\frac{\partial^{2} F}{\partial h_{ij}\partial h_{st}}h_{ijp}h_{stq}\right)\\
		&~+\frac{\partial F}{\partial h_{ij}}\frac{\partial G}{\partial h_{pq}}(-h_{mj}h_{mi}h_{pq}+h_{pm}h_{mq}h_{ji}).
	\end{align*}
	Moreover, using \eqref{2-1}, we obtain
	\begin{equation}\label{2-3}
	\begin{split}
	&\quad\frac{\partial F}{\partial h_{ij} } \nabla_{i}\nabla_{j}G-\nabla_{l}G\langle X, e_l \rangle\\
	&=\frac{\partial G}{\partial h_{ij}}h_{ij}\left(1-\frac{\partial F}{\partial h_{pq}}h_{pm}h_{mq} \right)+\frac{\partial G}{\partial h_{ij}}h_{jl}h_{li}
	\left(\frac{\partial F}{\partial h_{pq}}h_{pq}-F\right)\\
	&~+\left( \frac{\partial F}{\partial h_{ij}}\frac{\partial^2 G}{\partial h_{pq}\partial h_{st}}
	-\frac{\partial G}{\partial h_{ij}}\frac{\partial^{2} F}{\partial h_{pq}\partial h_{st}}\right) h_{pqj}h_{sti}.
	\end{split}
	\end{equation}
	
	The above equation is elliptic if $\frac{\partial F}{\partial h_{ij}}$ is positive definite. In fact, for $\lambda\in\Gamma_{+}$, the cases that we discuss are all elliptic.
	
	For convenience, we denote the first two terms on the right hand of the above equality by TERM I and the last term by TERM II. In fact, by Lemma \ref{Lem 2.2}, we have
	$$\text{TERM I}=\frac{\partial g}{\partial \lambda_{i}}\lambda_{i}\left(1-\frac{\partial f}{\partial \lambda_{p}}\lambda_{p}^{2}\right)+\frac{\partial g}{\partial \lambda_{i}}\lambda_{i}^{2}\left(\frac{\partial f}{\partial \lambda_{p}}\lambda_{p}-f\right)$$
	and
	\begin{align*}
	\text{TERM II}&=\left(\frac{\partial f}{\partial \lambda_{i}}\frac{\partial^{2} g}{\partial \lambda_{p} \partial \lambda_{q}}-\frac{\partial g}{\partial \lambda_{i}}\frac{\partial^{2} f}{\partial \lambda_{p} \partial \lambda_{q}}\right)h_{ppi}h_{qqi}\\
	&~+2\sum_{p<q}\left(\frac{\frac{\partial g}{\partial \lambda_{p}}-\frac{\partial g}{\partial \lambda_{q}}}{\lambda_{p}-\lambda_{q}}\frac{\partial f}{\partial \lambda_{i}}-\frac{\frac{\partial f}{\partial \lambda_{p}}-\frac{\partial f}{\partial \lambda_{q}}}{\lambda_{p}-\lambda_{q}}\frac{\partial g}{\partial \lambda_{i}}\right)h_{pqi}^{2}.
	\end{align*}

\section{the case of $F=\sigma_{k}^{\alpha}$  $(k\geq 2)$}
\label{sec:sigma_k}

	In this section, we consider the case of $F=\sigma_{k}^{\alpha}$ $(k\geq 2)$ and we choose $G=\frac{\sigma_{1}^{k}}{\sigma_{k}}$ as the  test function. By straightforward calculation, we obtain the following expressions which will be used later:
	\begin{equation}\label{f2}
		\frac{\partial^{2} f}{\partial \lambda_{p} \partial \lambda_{q}}=\alpha f\left(\frac{(\alpha-1)\sigma_{k-1}(\lambda|p)\sigma_{k-1}(\lambda|q)}{\sigma_{k}^{2}}+\frac{\sigma_{k-2}(\lambda|pq)}{\sigma_{k}}\right),
	\end{equation}
	
	\begin{equation}\label{f-}
		\frac{\frac{\partial f}{\partial \lambda_{p}}-\frac{\partial f}{\partial \lambda_{q}}}{\lambda_{p}-\lambda_{q}}=-\alpha f\frac{\sigma_{k-2}(\lambda|pq)}{\sigma_{k}},
	\end{equation}
	
	\begin{equation}\label{g1}
		\frac{\partial g}{\partial\lambda_{p}}=g\left(\frac{k}{\sigma_{1}}-\frac{\sigma_{k-1}(\lambda|p)}{\sigma_{k}}\right),
	\end{equation}
	
	\begin{equation}\label{g2}
		\begin{aligned} 
		\frac{\partial^{2} g}{\partial\lambda_{p}\partial\lambda_{q}}
		&=g\Big(\frac{k(k-1)}{\sigma_{1}^{2}}-\frac{k(\sigma_{k-1}(\lambda|p)+\sigma_{k-1}(\lambda|q))}{\sigma_{1}\sigma_{k}}\\
		&\quad-\frac{\sigma_{k-2}(\lambda|pq)}{\sigma_{k}}+\frac{2\sigma_{k-1}(\lambda|p)\sigma_{k-1}(\lambda|q)}{\sigma_{k}^{2}}\Big),
		\end{aligned}
	\end{equation}
	
	\begin{equation}\label{g-}
		\frac{\frac{\partial g}{\partial \lambda_{p}}-\frac{\partial g}{\partial \lambda_{q}}}{\lambda_{p}-\lambda_{q}}=g\frac{\sigma_{k-2}(\lambda|pq)}{\sigma_{k}}.
	\end{equation}

	We will see TERM I is non-negative and TERM II is non-negative under Condition \ref{condn}.
	
	\begin{lem}\label{Lem4.1}
		For $F=\sigma_{k}^{\alpha}$ $(k\geq 2)$ and $G=\frac{\sigma_{1}^{k}}{\sigma_{k}}$, \emph{TERM I} is non-negative 
		for $\alpha\geq \frac{1}{k}$. Moreover, it vanishes for $\alpha>\frac{1}{k}$ if and only if $\lambda_1=\lambda_2=\cdots=\lambda_n$.
	\end{lem}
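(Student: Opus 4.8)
The plan is to compute TERM I explicitly using the formulas \eqref{g1} for $\partial g/\partial\lambda_i$ and the homogeneity/Euler identities for $f=\sigma_k^\alpha$, and then recognize the resulting expression as a combination of elementary symmetric functions to which Corollary \ref{Cor 2.3} (or Lemma \ref{Lem 2.1}) applies. First I would record the two Euler-type identities: since $f$ is homogeneous of degree $k\alpha$, one has $\frac{\partial f}{\partial\lambda_p}\lambda_p = k\alpha f$; and since $\frac{\partial f}{\partial\lambda_p} = \alpha f\,\sigma_{k-1}(\lambda|p)/\sigma_k$ with $\sum_p \lambda_p^2\sigma_{k-1}(\lambda|p)$ expressible via $\sigma_1\sigma_k - (k+1)\sigma_{k+1}$ (the standard contraction identity $\sum_p \lambda_p \sigma_{k-1}(\lambda|p) = k\sigma_k$ differentiated/weighted once more), I would get $\frac{\partial f}{\partial\lambda_p}\lambda_p^2 = \alpha f\,\frac{\sigma_1\sigma_k-(k+1)\sigma_{k+1}}{\sigma_k}$. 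Likewise $\frac{\partial g}{\partial\lambda_i}\lambda_i$ and $\frac{\partial g}{\partial\lambda_i}\lambda_i^2$ are computed from \eqref{g1}: $\frac{\partial g}{\partial\lambda_i}\lambda_i = g(k - k) = 0$ by homogeneity of degree $0$ of $g$, wait — $g=\sigma_1^k/\sigma_k$ is homogeneous of degree $0$, so $\frac{\partial g}{\partial\lambda_i}\lambda_i = 0$; and $\frac{\partial g}{\partial\lambda_i}\lambda_i^2 = g\big(k\sigma_1 - \frac{\sigma_1\sigma_k-(k+1)\sigma_{k+1}}{\sigma_k}\big) = g\big((k-1)\sigma_1 + \frac{(k+1)\sigma_{k+1}}{\sigma_k}\big)$, using $\sum_i \lambda_i^2 = \sigma_1^2 - 2\sigma_2$ together with $\sum_i \lambda_i \sigma_{k-1}(\lambda|i) = k\sigma_k$. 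Actually the cleanest route is to handle the $\frac{\partial g}{\partial\lambda_i}\lambda_i^2$ term directly as $\sum_i \lambda_i^2 \cdot g(k/\sigma_1 - \sigma_{k-1}(\lambda|i)/\sigma_k)$.

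Substituting into TERM I $= \frac{\partial g}{\partial\lambda_i}\lambda_i\big(1 - \frac{\partial f}{\partial\lambda_p}\lambda_p^2\big) + \frac{\partial g}{\partial\lambda_i}\lambda_i^2\big(\frac{\partial f}{\partial\lambda_p}\lambda_p - f\big)$, the first summand vanishes because $\frac{\partial g}{\partial\lambda_i}\lambda_i = 0$. The second summand becomes $\frac{\partial g}{\partial\lambda_i}\lambda_i^2 \cdot (k\alpha f - f) = (k\alpha - 1) f \cdot \frac{\partial g}{\partial\lambda_i}\lambda_i^2$. Since $\alpha \geq \frac1k$ gives $k\alpha - 1 \geq 0$, and $f = \sigma_k^\alpha > 0$ on $\Gamma_+$, it remains to show $\frac{\partial g}{\partial\lambda_i}\lambda_i^2 \geq 0$. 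By the computation above this equals $g\big((k-1)\sigma_1 - 2k\frac{\sigma_2}{\sigma_1} + (k+1)\frac{\sigma_{k+1}}{\sigma_k}\big)$ — I would double-check the coefficient via $k\sum_i\lambda_i^2/\sigma_1 = k(\sigma_1^2-2\sigma_2)/\sigma_1 = k\sigma_1 - 2k\sigma_2/\sigma_1$ and $\sum_i \lambda_i^2\sigma_{k-1}(\lambda|i)/\sigma_k = (\sigma_1\sigma_k - (k+1)\sigma_{k+1})/\sigma_k = \sigma_1 - (k+1)\sigma_{k+1}/\sigma_k$, whose difference is exactly $(k-1)\sigma_1 - 2k\sigma_2/\sigma_1 + (k+1)\sigma_{k+1}/\sigma_k$. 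This is precisely the expression in Corollary \ref{Cor 2.3}, which is $\geq 0$ on $\Gamma_+$ with equality iff all $\lambda_i$ are equal. Since $g > 0$, we conclude $\frac{\partial g}{\partial\lambda_i}\lambda_i^2 \geq 0$, hence TERM I $\geq 0$.

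For the equality characterization when $\alpha > \frac1k$: then $k\alpha - 1 > 0$ and $f > 0$, so TERM I $= 0$ forces $\frac{\partial g}{\partial\lambda_i}\lambda_i^2 = 0$, i.e. the Corollary \ref{Cor 2.3} expression vanishes, which by that corollary happens iff $\lambda_1 = \cdots = \lambda_n$. Conversely, at an umbilic point the corollary expression is zero so TERM I vanishes. I expect the main obstacle to be purely bookkeeping: getting the contraction identity $\sum_i \lambda_i^2 \,\sigma_{k-1}(\lambda|i) = \sigma_1\sigma_k - (k+1)\sigma_{k+1}$ right (it follows from $\sigma_{k-1}(\lambda|i) = \sigma_{k-1}(\lambda) - \lambda_i\sigma_{k-2}(\lambda|i)$ combined twice with $\sum_i\lambda_i\sigma_{k-1}(\lambda|i)=k\sigma_k$ and $\sum_i \lambda_i^2\sigma_{k-2}(\lambda|i)$-type relations, or more slickly from differentiating $f(x)=\prod(1-\lambda_i x)$ as in the proof of Lemma \ref{Lem 2.1}), and making sure no stray factor enters when passing from the $h_{ij}$-formulation of TERM I to the $\lambda$-formulation via Lemma \ref{Lem 2.2}(i). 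Once the algebra collapses to the Corollary \ref{Cor 2.3} expression, the inequality and its equality case are immediate.
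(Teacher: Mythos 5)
Your proposal is correct and follows essentially the same route as the paper: use the degree-$0$ homogeneity of $g$ to kill the first summand, Euler's identity $\frac{\partial f}{\partial\lambda_p}\lambda_p=k\alpha f$ together with the contraction identity $\sum_i\lambda_i^2\sigma_{k-1}(\lambda|i)=\sigma_1\sigma_k-(k+1)\sigma_{k+1}$ to reduce TERM I to $(k\alpha-1)fg\bigl((k-1)\sigma_1-2k\tfrac{\sigma_2}{\sigma_1}+(k+1)\tfrac{\sigma_{k+1}}{\sigma_k}\bigr)$, and then invoke Corollary \ref{Cor 2.3}. The only slip is the intermediate line where you wrote $k\sigma_1$ in place of $k(\sigma_1^2-2\sigma_2)/\sigma_1$, which you correct yourself in the subsequent coefficient check, so the final expression and the equality characterization are exactly the paper's.
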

	
	\begin{proof}
		Noting that $G=\frac{\sigma_{1}^{k}}{\sigma_{k}}$ is homogeneous of degree $0$, we have $\frac{\partial g}{\partial \lambda_{i}}\lambda_{i}=0$.	And, \eqref{g1} and 
	$\sum_{i=1}^{n}\sigma_{k-1}(\lambda|i)\lambda_{i}^{2}=\sigma_{1}(\lambda)\sigma_{k}(\lambda)-(k+1)\sigma_{k+1}(\lambda)$ yield
		\begin{align*}
		\frac{\partial g}{\partial \lambda_{p}}\lambda_{p}^{2}
		&=g\left(\frac{k(\sigma_{1}^{2}-2\sigma_{2})}{\sigma_{1}}-\frac{\sigma_{k-1}(\lambda|p)\lambda_{p}^{2}}{\sigma_{k}}\right)\\
		&=g\left((k-1)\sigma_{1}-2k\frac{\sigma_2}{\sigma_1}+(k+1)\frac{\sigma_{k+1}}{\sigma_{k}}\right).
		\end{align*}
		Thus,
		$$\text{TERM I}=(k\alpha-1)fg\left((k-1)\sigma_{1}-2k\frac{\sigma_2}{\sigma_1}+(k+1)\frac{\sigma_{k+1}}{\sigma_{k}}\right).$$
		For $f>0$ and $g>0$, the proof is finished by Corollary \ref{Cor 2.3}.
	\end{proof}
	
	Now, we consider TERM II and complete the proof of Theorem \ref{Thm1.1}.
	\begin{lem}
		For $F=\sigma_{k}^{\alpha}$ $(k\geq 2)$ and $G=\frac{\sigma_{1}^{k}}{\sigma_{k}}$, if $G$ attains its maximum at $x_{0}$, then, at $x_{0}$,
		\begin{align*}
		\mathrm{TERM ~ II}&=\frac{\alpha kfg}{\sigma_{1}^{3}\sigma_{k}}\Big(\sum_{i}\sum_{p\neq q}\sigma_{1}^{2}\sigma_{k-2}(\lambda|pq)(h_{pqi}^{2}-h_{ppi}h_{qqi})\\
		&~+\sum_{i}\big(-(\alpha-1)k^{2}\sigma_{k}+(\alpha k-1)\sigma_{1}\sigma_{k-1}(\lambda|i)\big)(\nabla_{i}\sigma_{1})^{2})\Big).
		\end{align*}
	\end{lem}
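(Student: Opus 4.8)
The plan is to start from the master identity \eqref{2-3} with $F=\sigma_k^\alpha$ and the test function $G=\sigma_1^k/\sigma_k$, and specialize TERM II. By Lemma \ref{Lem 2.2}(ii) as rewritten in Section \ref{sec:preparation}, TERM II splits into a diagonal part, weighted by $\frac{\partial f}{\partial\lambda_i}\frac{\partial^2 g}{\partial\lambda_p\partial\lambda_q}-\frac{\partial g}{\partial\lambda_i}\frac{\partial^2 f}{\partial\lambda_p\partial\lambda_q}$ and contracted with $h_{ppi}h_{qqi}$, and an off-diagonal part, weighted by $\frac{\frac{\partial g}{\partial\lambda_p}-\frac{\partial g}{\partial\lambda_q}}{\lambda_p-\lambda_q}\frac{\partial f}{\partial\lambda_i}-\frac{\frac{\partial f}{\partial\lambda_p}-\frac{\partial f}{\partial\lambda_q}}{\lambda_p-\lambda_q}\frac{\partial g}{\partial\lambda_i}$ and contracted with $h_{pqi}^2$. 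The first step is to plug the explicit formulae \eqref{f2}, \eqref{f-}, \eqref{g1}, \eqref{g2}, \eqref{g-} into these two weights. For the off-diagonal weight, \eqref{f-} and \eqref{g-} immediately give $\bigl(g\frac{\sigma_{k-2}(\lambda|pq)}{\sigma_k}\bigr)\alpha f\bigl(\frac{k}{\sigma_1}-\frac{\sigma_{k-1}(\lambda|p)}{\sigma_k}\bigr)+\bigl(\alpha f\frac{\sigma_{k-2}(\lambda|pq)}{\sigma_k}\bigr)g\bigl(\frac{k}{\sigma_1}-\frac{\sigma_{k-1}(\lambda|p)}{\sigma_k}\bigr)$, wait — one must be careful that the two $\partial g/\partial\lambda_i$ versus $\partial f/\partial\lambda_i$ factors carry the free index $i$, not $p$; so the off-diagonal coefficient of $h_{pqi}^2$ is $\alpha fg\frac{\sigma_{k-2}(\lambda|pq)}{\sigma_k}\bigl(\frac{k}{\sigma_1}-\frac{\sigma_{k-1}(\lambda|i)}{\sigma_k}+\frac{k}{\sigma_1}-\frac{\sigma_{k-1}(\lambda|i)}{\sigma_k}\bigr)$. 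I would then observe that $\frac{k}{\sigma_1}-\frac{\sigma_{k-1}(\lambda|i)}{\sigma_k}$ is (up to the factor $g$) exactly $\frac{\partial g}{\partial\lambda_i}/g$, which is where $\nabla_i\sigma_1$ will enter through the relation $\nabla_i G = \frac{\partial g}{\partial \lambda_i}\cdot(\text{something})$ — but this requires the hypothesis that $G$ attains its maximum at $x_0$.

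The role of the maximality hypothesis is the second key step: at an interior maximum $x_0$ of $G$ one has $\nabla_i G=0$ for every $i$. Since $\nabla_i G = \frac{\partial G}{\partial h_{pq}} h_{pqi} = \frac{\partial g}{\partial\lambda_p}h_{ppi}$ at a point where we diagonalize $A$, the vanishing of $\nabla_i G$ gives $\sum_p \frac{\partial g}{\partial\lambda_p}h_{ppi}=0$, i.e.\ $\sum_p \bigl(\frac{k}{\sigma_1}-\frac{\sigma_{k-1}(\lambda|p)}{\sigma_k}\bigr)h_{ppi}=0$. Using $\sum_p h_{ppi}=\nabla_i\sigma_1$ this rearranges to $\frac{k}{\sigma_1}\nabla_i\sigma_1 = \frac{1}{\sigma_k}\sum_p \sigma_{k-1}(\lambda|p)h_{ppi}$, which is the substitution that lets us trade the various $\sum_p\sigma_{k-1}(\lambda|p)h_{ppi}$ terms appearing in the diagonal part for multiples of $\nabla_i\sigma_1$. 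After this substitution the diagonal part, which a priori is a general quadratic form in $\{h_{ppi}\}$, collapses: all the cross-terms organize into (a) a piece proportional to $\sigma_{k-2}(\lambda|pq)h_{ppi}h_{qqi}$ that combines with the $h_{pqi}^2$ off-diagonal piece to form the double sum $\sum_i\sum_{p\neq q}\sigma_1^2\sigma_{k-2}(\lambda|pq)(h_{pqi}^2 - h_{ppi}h_{qqi})$ in the claimed formula, and (b) a leftover piece that is a pure multiple of $(\nabla_i\sigma_1)^2$, whose coefficient we must check equals $-(\alpha-1)k^2\sigma_k + (\alpha k-1)\sigma_1\sigma_{k-1}(\lambda|i)$ up to the overall $\frac{\alpha kfg}{\sigma_1^3\sigma_k}$.

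Concretely I would: (i) write the diagonal coefficient using \eqref{f2}, \eqref{g2}, \eqref{g1}, factoring out $\alpha fg$, getting a sum of terms with denominators $\sigma_1^2$, $\sigma_1\sigma_k$, $\sigma_k^2$; (ii) contract with $h_{ppi}h_{qqi}$ and sum over $p,q$, splitting each sum into diagonal ($p=q$) and off-diagonal ($p\neq q$) contributions and using $\sum_p h_{ppi}=\nabla_i\sigma_1$, $\sum_p\sigma_{k-1}(\lambda|p)h_{ppi} = \frac{k\sigma_k}{\sigma_1}\nabla_i\sigma_1$ (from step two), and $\sum_p\sigma_{k-1}(\lambda|p)^2 h_{ppi}$-type terms where needed; (iii) separately the $\frac{(\alpha-1)}{\sigma_k^2}\sigma_{k-1}(\lambda|p)\sigma_{k-1}(\lambda|q)$ terms from $f''$ and the $\frac{2}{\sigma_k^2}\sigma_{k-1}(\lambda|p)\sigma_{k-1}(\lambda|q)$ terms from $g''$ will, after the substitution, produce clean $(\nabla_i\sigma_1)^2$ multiples; (iv) identify the $\sigma_{k-2}(\lambda|pq)$-weighted terms, noting the diagonal $p=q$ contribution $\sigma_{k-2}(\lambda|pp)=0$ lets us extend or restrict sums freely, and pair the $-\sigma_{k-2}(\lambda|pq)h_{ppi}h_{qqi}$ from the diagonal weight with the $+\sigma_{k-2}(\lambda|pq)h_{pqi}^2$ from the off-diagonal weight; (v) collect everything over the common denominator $\sigma_1^3\sigma_k$ and read off the stated identity. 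The main obstacle I anticipate is purely bookkeeping: keeping straight which symmetric-function index is the free contraction index $i$ versus the summed indices $p,q$, and correctly handling the $p=q$ versus $p\neq q$ boundary in every sum (the identity $\sigma_{k-2}(\lambda|pp)=0$ and $\sigma_{k-1}(\lambda|p)=\partial\sigma_k/\partial\lambda_p$ are used repeatedly), so that the quadratic form in $h_{ppi}$ genuinely degenerates — after imposing $\nabla_i G=0$ — into exactly the two advertised terms with no residue. A secondary check is that the overall sign and the factor $\alpha k$ (rather than $\alpha$ or $k$ alone) come out right; this is where the homogeneity of $G$ (degree $0$, so $\sum_i\frac{\partial g}{\partial\lambda_i}\lambda_i=0$) and the earlier computation in Lemma \ref{Lem4.1} serve as useful consistency tests.
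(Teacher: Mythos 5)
Your overall plan is exactly the paper's proof: substitute the explicit formulae \eqref{f2}--\eqref{g-} into the two weights of TERM II, use the critical-point condition $\nabla_i G=0$ at $x_0$ (equivalently $\sum_p\sigma_{k-1}(\lambda|p)h_{ppi}=\nabla_i\sigma_k=\tfrac{k\sigma_k}{\sigma_1}\nabla_i\sigma_1$) to trade every occurrence of $\nabla_i\sigma_k$ for $\nabla_i\sigma_1$, and then collect the $\sigma_{k-2}(\lambda|pq)$-weighted pieces with the off-diagonal $h_{pqi}^2$ terms and the residue into $(\nabla_i\sigma_1)^2$ multiples. All of that is correct and is precisely how the paper proceeds.

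However, there is one concrete error in the step you wrote out explicitly, and as written it would prevent the identity from closing. You record the first factor of the off-diagonal weight as $\frac{\partial f}{\partial\lambda_i}=\alpha f\bigl(\frac{k}{\sigma_1}-\frac{\sigma_{k-1}(\lambda|i)}{\sigma_k}\bigr)$; that is the formula for $\alpha f\cdot\frac{1}{g}\frac{\partial g}{\partial\lambda_i}$, not for $\frac{\partial f}{\partial\lambda_i}$. Since $f=\sigma_k^\alpha$, the correct expression is $\frac{\partial f}{\partial\lambda_i}=\alpha f\,\frac{\sigma_{k-1}(\lambda|i)}{\sigma_k}$. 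The whole point of the off-diagonal weight
\begin{equation*}
\frac{\frac{\partial g}{\partial\lambda_p}-\frac{\partial g}{\partial\lambda_q}}{\lambda_p-\lambda_q}\,\frac{\partial f}{\partial\lambda_i}
-\frac{\frac{\partial f}{\partial\lambda_p}-\frac{\partial f}{\partial\lambda_q}}{\lambda_p-\lambda_q}\,\frac{\partial g}{\partial\lambda_i}
=\alpha fg\,\frac{\sigma_{k-2}(\lambda|pq)}{\sigma_k}\Bigl(\frac{\sigma_{k-1}(\lambda|i)}{\sigma_k}+\frac{k}{\sigma_1}-\frac{\sigma_{k-1}(\lambda|i)}{\sigma_k}\Bigr)
=\frac{\alpha kfg}{\sigma_1\sigma_k}\,\sigma_{k-2}(\lambda|pq)
\end{equation*}
is that the two $\sigma_{k-1}(\lambda|i)/\sigma_k$ contributions cancel, leaving a coefficient independent of $i$; only then does it pair with the $-\frac{\alpha kfg}{\sigma_1\sigma_k}\sigma_{k-2}(\lambda|pq)h_{ppi}h_{qqi}$ produced by the diagonal weight (where the identical cancellation occurs) to give the advertised $\sum_i\sum_{p\neq q}\sigma_1^2\sigma_{k-2}(\lambda|pq)(h_{pqi}^2-h_{ppi}h_{qqi})$ after factoring out $\frac{\alpha kfg}{\sigma_1^3\sigma_k}$. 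Your version gives $2\bigl(\frac{k}{\sigma_1}-\frac{\sigma_{k-1}(\lambda|i)}{\sigma_k}\bigr)$ in place of $\frac{k}{\sigma_1}$, which is wrong both by the spurious $i$-dependence and by a factor; carried forward, the bookkeeping in your steps (iv)--(v) would not degenerate to the stated two terms. With $\frac{\partial f}{\partial\lambda_i}$ corrected, the rest of your outline (including the observation that the $-\frac{2k\nabla_i\sigma_1\nabla_i\sigma_k}{\sigma_1\sigma_k}$ and $\frac{2(\nabla_i\sigma_k)^2}{\sigma_k^2}$ terms in $g''$ cancel at $x_0$, and that the $(\alpha-1)$-terms from $f''$ become $\frac{(\alpha-1)k^2(\nabla_i\sigma_1)^2}{\sigma_1^2}$) does produce the coefficient $-(\alpha-1)k^2\sigma_k+(\alpha k-1)\sigma_1\sigma_{k-1}(\lambda|i)$ as claimed.
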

	
	\begin{proof}
		By Lemma \ref{Lem 2.2}, we have $\sigma_{k-1}(\lambda|p)h_{ppi}=\nabla_{i}\sigma_{k}$. Then, by \eqref{f2} and \eqref{g2}, $$\frac{\partial^{2} f}{\partial \lambda_{p} \partial \lambda_{q}}h_{ppi}h_{qqi}=\alpha f\left(\sum_{i}\frac{(\alpha-1)(\nabla_{i}\sigma_{k})^{2}}{\sigma_{k}^{2}}+\sum_{i}\sum_{p\neq q}\frac{\sigma_{k-2}(\lambda|pq)}{\sigma_{k}}h_{ppi}h_{qqi}\right)$$
		and 
		\begin{align*}
			\frac{\partial^{2} g}{\partial\lambda_{p}\partial\lambda_{q}}h_{ppi}h_{qqi}=&g\sum_{i}\Big(\frac{k(k-1)(\nabla_{i}\sigma_{1})^{2}}{\sigma_{1}^{2}}-\frac{2k\nabla_{i}\sigma_{1}\nabla_{i}\sigma_{k}}{\sigma_{1}\sigma_{k}}\\
			&-\sum_{p\neq q}\frac{\sigma_{k-2}(\lambda|pq)}{\sigma_{k}}h_{ppi}h_{qqi}+\frac{2(\nabla_{i}\sigma_{k})^{2}}{\sigma_{k}^{2}}\Big).
		\end{align*}
		
		Since $G$ attains its maximum at $x_{0}$, then $\nabla_{l}G=0$ at $x_0$ which  implies
		$\displaystyle\frac{k\nabla_{l}\sigma_{1}}{\sigma_{1}}=\frac{\nabla_{l}\sigma_{k}}{\sigma_{k}}$ at $x_{0}$.
		Thus, at $x_{0}$, $$\frac{\partial^{2} f}{\partial \lambda_{p} \partial \lambda_{q}}h_{ppi}h_{qqi}=\alpha f\left(\sum_{i}\frac{(\alpha-1)k^{2}(\nabla_{i}\sigma_{1})^{2}}{\sigma_{1}^{2}}+\sum_{i}\sum_{p\neq q}\frac{\sigma_{k-2}(\lambda|pq)}{\sigma_{k}}h_{ppi}h_{qqi}\right)$$
		and $$\frac{\partial^{2} g}{\partial\lambda_{p}\partial\lambda_{q}}h_{ppi}h_{qqi}=g\Big(\sum_{i}\frac{k(k-1)(\nabla_{i}\sigma_{1})^{2}}{\sigma_{1}^{2}}-\sum_{i}\sum_{p\neq q}\frac{\sigma_{k-2}(\lambda|pq)}{\sigma_{k}}h_{ppi}h_{qqi}\Big).$$
		
		Furthermore, using \eqref{f-}, \eqref{g1}, \eqref{g-}, at $x_{0}$, we get
		\begin{align*}
			\text{TERM II}&=\frac{\alpha kfg}{\sigma_{1}^{3}\sigma_{k}}\Big(\sum_{i}\sum_{p\neq q}\sigma_{1}^{2}\sigma_{k-2}(\lambda|pq)(h_{pqi}^{2}-h_{ppi}h_{qqi})\\
			&~+\sum_{i}\big(-(\alpha-1)k^{2}\sigma_{k}+(\alpha k-1)\sigma_{1}\sigma_{k-1}(\lambda|i)\big)(\nabla_{i}\sigma_{1})^{2})\Big).
		\end{align*}
	\end{proof}
	
	For convenience, we denote 
	$$A_{ij}=\sigma_{1}^{2}\sigma_{k-2}(\lambda|ij)$$ 
	and 
	$$B_{p}=-(\alpha-1)k^{2}\sigma_{k}+(\alpha k-1)\sigma_{1}\sigma_{k-1}(\lambda|p).$$ Then, 
	$$\text{TERM II}=\frac{\alpha kfg}{\sigma_{1}^{3}\sigma_{k}}\big(\sum_{i\neq j}  \sum_{p} A_{ij}(h_{ijp}^{2}-h_{iip}h_{jjp})+\sum_{p} B_{p}(\nabla_{p}\sigma_{1})^{2})\big).$$

	
	\begin{lem}\label{Lem 4.3}
		Let $M$ be a closed strictly convex hypersurface in $\mathbb{R}^{n+1}$ with $n\geq 2$ satisfying Condition \ref{condn}. For $F=\sigma_{k}^{\alpha}$ $(k\geq 2)$ and $G=\frac{\sigma_{1}^{k}}{\sigma_{k}}$, if $G$ attains its maximum at $x_{0}$, then, at $x_{0}$, \emph{TERM II} is non-negative.
	\end{lem}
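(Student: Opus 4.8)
The plan is to prove that the bracketed quantity
\[
\Phi:=\sum_{p}\Big(\sum_{i\neq j}A_{ij}\bigl(h_{ijp}^{2}-h_{iip}h_{jjp}\bigr)+B_{p}(\nabla_{p}\sigma_{1})^{2}\Big)
\]
is non-negative at $x_{0}$; since $\alpha k f g/(\sigma_{1}^{3}\sigma_{k})>0$ on $\Gamma_{+}$ this yields the lemma. First I would record what Condition \ref{condn} provides: since $A_{ij}=\sigma_{1}^{2}\sigma_{k-2}(\lambda|ij)>0$ on $\Gamma_{+}$, the condition forces $B_{p}>0$ and $A_{ij}\geq B_{p}$, and—writing $b:=\min_{p}B_{p}$—one has $B_{p}\geq b$ together with $A_{ij}\leq(1+\delta)b$ for all admissible indices, hence $0\leq A_{ij}-B_{p}\leq\delta b$.

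The key algebraic step is, for each fixed $p$, to use $\sum_{i\neq j}h_{iip}h_{jjp}=(\nabla_{p}\sigma_{1})^{2}-\sum_{i}h_{iip}^{2}$ to rewrite
\[
\sum_{i\neq j}A_{ij}\bigl(h_{ijp}^{2}-h_{iip}h_{jjp}\bigr)+B_{p}(\nabla_{p}\sigma_{1})^{2}=B_{p}\sum_{i,j}h_{ijp}^{2}+\sum_{i\neq j}(A_{ij}-B_{p})\bigl(h_{ijp}^{2}-h_{iip}h_{jjp}\bigr),
\]
so that $\Phi=\sum_{p}B_{p}\sum_{i,j}h_{ijp}^{2}+\sum_{p}\sum_{i\neq j}(A_{ij}-B_{p})(h_{ijp}^{2}-h_{iip}h_{jjp})$. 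In the first sum $B_{p}\geq b$ gives $\sum_{p}B_{p}\sum_{i,j}h_{ijp}^{2}\geq b\,|\nabla A|^{2}$; in the second sum the terms $(A_{ij}-B_{p})h_{ijp}^{2}$ are non-negative and may be dropped, while $-(A_{ij}-B_{p})h_{iip}h_{jjp}\geq-\delta b\,|h_{iip}|\,|h_{jjp}|$. Thus it remains to prove the pointwise gradient inequality
\[
|\nabla A|^{2}\ \geq\ \delta\sum_{p}\sum_{i\neq j}|h_{iip}|\,|h_{jjp}|.
\]
(For $k=2$, where $A_{ij}=\sigma_{1}^{2}$ is constant, the $p$-th summand of the second sum factors as $(\sigma_{1}^{2}-B_{p})\bigl(\sum_{i,j}h_{ijp}^{2}-(\nabla_{p}\sigma_{1})^{2}\bigr)$, whence $\Phi=\sigma_{1}^{2}|\nabla A|^{2}-\sum_{p}(\sigma_{1}^{2}-B_{p})(\nabla_{p}\sigma_{1})^{2}$, and one instead needs $|\nabla A|^{2}\geq\frac{\delta}{1+\delta}|\nabla\sigma_{1}|^{2}$, the cleaner parallel of the same estimate.)

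The heart of the matter is this last inequality, and it is where the precise value of $\delta$ in \eqref{eq:delta} enters. Since $h_{ijk}$ is totally symmetric by the Codazzi equation, I would split
\[
|\nabla A|^{2}=\sum_{i,j,p}h_{ijp}^{2}=D+3R+C,\qquad D=\sum_{i}h_{iii}^{2},\quad R=\sum_{i\neq j}h_{iij}^{2},\quad C=\sum_{\text{distinct }i,j,p}h_{ijp}^{2}\geq0,
\]
and note $\sum_{i,p}h_{iip}^{2}=D+R$. Splitting $\sum_{p}\sum_{i\neq j}|h_{iip}|\,|h_{jjp}|$ according to whether the gradient index $p$ coincides with $i$ or $j$, and applying Cauchy--Schwarz with a free parameter $\eta>0$, leads to a bound of the form $(n-1)\eta\,D+(\eta^{-1}+n-2)\,R$. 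Hence $\Phi\geq b\bigl[(1-\delta(n-1)\eta)D+(3-\delta(\eta^{-1}+n-2))R+C\bigr]$; choosing $\eta=\frac{1}{\delta(n-1)}$ annihilates the coefficient of $D$, and the coefficient of $R$ becomes $3-(n-1)\delta^{2}-(n-2)\delta$, which is $\geq0$ exactly when $(n-1)\delta^{2}+(n-2)\delta\leq3$—and the value of $\delta$ in \eqref{eq:delta} for $3\leq k\leq n-1$ is precisely the positive root of this quadratic. In the case $k=2$ the same optimization (or, alternatively, the classical bound $|\nabla A|^{2}\geq\frac{3}{n+2}|\nabla\sigma_{1}|^{2}$) produces the linear threshold $\delta(n-1)\leq3$, matching \eqref{eq:delta}. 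In every case one arrives at $\Phi\geq b\,C\geq0$, as desired.

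I expect the main obstacle to be the bookkeeping in the case $k\geq3$: because $A_{ij}=\sigma_{1}^{2}\sigma_{k-2}(\lambda|ij)$ genuinely depends on the pair $(i,j)$, one cannot factor the cross sum and is forced to estimate $\sum_{p}\sum_{i\neq j}(A_{ij}-B_{p})h_{iip}h_{jjp}$ through the single uniform bound $A_{ij}-B_{p}\leq\delta b$. Retaining the full Codazzi symmetry (the $D,R,C$ splitting) and optimizing $\eta$ is exactly what keeps the admissible range of $\delta$ at the quadratic threshold of \eqref{eq:delta}, rather than degrading it to the naive $\delta\leq1/(n-1)$ that a blunt application of Cauchy--Schwarz would give; carefully tracking which components of $\nabla A$ appear on which side of the estimate is the delicate part of the argument.
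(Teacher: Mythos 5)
Your argument is correct and lands on exactly the same quadratic threshold $(n-1)\delta^{2}+(n-2)\delta\le 3$ as the paper, but it is packaged differently. The paper expands the sum index-by-index into $\sum_i B_i h_{iii}^2+\sum_{i\neq j}(2A_{ij}+B_j)h_{iij}^2+\sum_{\neq}A_{ij}h_{ijp}^2$ plus the two families of cross terms, applies weighted Young's inequality with weights $a_{ij}=B_i/(n-1)$ and $c_{ipj}=d_{pij}=\tfrac12|B_j-A_{ip}|$, and reduces to the pointwise scalar inequality \eqref{last}; your choice $\eta=1/(\delta(n-1))$ is precisely the same optimal split, so the mechanism is identical. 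What you do differently is (i) first absorb everything into a single uniform constant $b=\min_p B_p$ via $0\le A_{ij}-B_p\le\delta b$, which decouples the estimate into the clean, universal gradient-pinching inequality $|\nabla A|^2\ge\delta\sum_p\sum_{i\neq j}|h_{iip}||h_{jjp}|$ proved through the $D,R,C$ Codazzi decomposition; and (ii) treat $k=2$ separately by exploiting that $A_{ij}=\sigma_1^2$ is index-independent, factoring the cross sum and invoking Huisken's bound $|\nabla A|^2\ge\frac{3}{n+2}|\nabla\sigma_1|^2$. Point (ii) is not optional in your scheme: your uniform reduction, applied blindly to $k=2$, would only yield the quadratic threshold, which $\delta=3/(n-1)$ violates; the paper instead recovers the $k=2$ value from \eqref{last} because there the ratio $A_{ij}/B_j$ appears on the left-hand side as well and grows with $\delta$. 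You correctly anticipated this and supplied the separate argument, so there is no gap. The trade-off: your version isolates a reusable inequality on $\nabla A$ and makes transparent where $\delta$ comes from, while the paper's index-level bookkeeping is what lets both cases of \eqref{eq:delta} fall out of a single displayed inequality.
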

	
	\begin{proof}
		It  suffices to check if  $\sum_{i\neq j}\sum_p A_{ij}(h_{ijp}^{2}-h_{iip}h_{jjp})
		+\sum_{p}B_{p}(\nabla_{p}\sigma_{1})^{2}$ is non-negative.
		Firstly, we notice		
		\begin{align*}
		&~\sum_{i\neq j}\sum_p A_{ij}(h_{ijp}^{2}-h_{iip}h_{jjp})
		+\sum_{p}B_{p}(\nabla_{p}\sigma_{1})^{2}\\
		&=\sum_{i\neq j}A_{ij}(h_{iji}^{2}+h_{ijj}^{2})
		+\sum_{\neq}A_{ij}h_{ijp}^{2}-\sum_{i\neq j}A_{ij}(h_{iii}h_{jji}+h_{iij}h_{jjj})\\
		&~-\sum_{\neq}A_{ij}h_{iip}h_{jjp}
		+\sum_{p}B_{p}(\sum_{i}h_{iip}^{2}+\sum_{i\neq j}h_{iip}h_{jjp})\\
		&=2\sum_{i\neq j}A_{ij}h_{iij}^{2}+\sum_{\neq}A_{ij}h_{ijp}^{2}-2\sum_{i\neq j}A_{ij}h_{iii}h_{jji}-\sum_{\neq}A_{ij}h_{iip}h_{jjp}
		+\sum_{i}B_{i}h_{iii}^{2}\\
		&~+\sum_{i\neq p}B_{p}h_{iip}^{2}+\sum_{i\neq j} (B_{i}h_{iii}h_{jji}+B_{j}h_{iij}h_{jjj}) +\sum_{\neq}B_{p}h_{iip}h_{jjp}\\
		&= \sum_{i}B_{i}h_{iii}^{2}+\sum_{i\neq j}\left( 2A_{ij}+B_{j} \right)h_{iij}^{2}+\sum_{\neq}A_{ij}h_{ijp}^{2}
		+2\sum_{i\neq j}\left(-A_{ij}+B_{i} \right) h_{iii}h_{jji}\\
		&~+\sum_{\neq}\left(-A_{ij}+ B_{p} \right)h_{iip}h_{jjp},
		\end{align*}
		where $\neq$ represents $i, j, p$ are pairwise distinct.
		
		Now, 
		we estimate the lower bounds of the last two terms. For fixed $i,j$ and $p$, we have
		\begin{equation*}
			2\left(-A_{ij}	+B_{i} \right) h_{iii}h_{jji}
			\geq -a_{ij}h_{iii}^{2}-b_{ij}h_{jji}^{2}\, ,
		\end{equation*}
		where $a_{ij}>0, b_{ij}>0$ are constants satisfying  
		\begin{equation}\label{ab}
			a_{ij} b_{ij}=\left(-A_{ij}+B_{i} \right)^{2}.
		\end{equation}
		
		And
		\begin{equation*}
			\left(-A_{ij}+B_{p} \right)h_{iip}h_{jjp}
			\geq -c_{ijp}h_{iip}^{2}-d_{ijp}h_{jjp}^{2}\, ,
		\end{equation*}
		where $c_{ijp}>0$, $d_{ijp}>0$ are constants satisfying
		\begin{equation}\label{cd}
			4c_{ijp}d_{ijp}=\left(-A_{ij}+B_{p} \right)^{2}
		\end{equation}
		and $c_{ijp}=c_{jip}$, $d_{ijp}=d_{jip}$. 
		
		Thus we obtain
		\begin{align*}
			&~\sum_{i\neq j}\sum_p A_{ij} (h_{ijp}^{2}-h_{iip}h_{jjp})
			+\sum_{p}B_{p}(\nabla_{p}\sigma_{1})^{2}\\
			&\geq \sum_{i}\left(B_{i}-\sum_{j\neq i}a_{ij} \right) h_{iii}^{2}
			+\sum_{i\neq j}\Big(2A_{ij}+B_{j}-b_{ji}-\sum_{p\neq i, p\neq j}(c_{ipj}+d_{pij}) \Big)h_{iij}^{2}\\
			&~+\sum_{\neq}A_{ij}h_{ijp}^{2}.
		\end{align*}
		
		Condition \ref{condn} implies $B_{i}>0$, then we can choose $a_{ij}=\frac{1}{n-1}B_{i}$. Then, from (\ref{ab}), we have 
		$$b_{ij}=\left(-A_{ij}+B_{i} \right)^{2}a_{ij}^{-1}
		=\frac{(n-1)\left(-A_{ij}+B_{i} \right)^{2}}{B_{i}}.$$
				
		And, we can choose $c_{ijp}=d_{ijp}$, because $h_{iip}$ and $h_{jjp}$ are the same type of terms.	Furthermore, from $(\ref{cd})$, we obtain
		$$c_{ipj}=d_{pij}=\frac{1}{2}|-A_{ip}+B_{j}|.$$
				
		Then, we just need 
		\begin{equation*}
				2A_{ij}+B_{j}\geq \frac{(n-1)\left(-A_{ij}+B_{j}\right)^{2}}{B_{j}}
				+\sum_{p\neq i, p\neq j}|-A_{ip}+B_{j}|.
		\end{equation*}
				
		For $B_{j}>0$, the above inequality is equivalent to
		\begin{equation}\label{last}
			\frac{2A_{ij}}{B_{j}}+1\geq (n-1)\left(-\frac{A_{ij}}{B_{j}}+1\right)^{2}
			+\sum_{p\neq i, p\neq j}|-\frac{A_{ip}}{B_{j}}+1|.
		\end{equation} 
		It is easy to check this inequality holds if $1\leq\frac{A_{ij}}{B_{p}}\leq 1+\delta$  with $\delta$ satisfies \eqref{eq:delta} for all $1\leq p\leq n$ and $1\leq i<j\leq n$.
	\end{proof}
	
	\begin{proof}[Proof of Theorem 1.4]
		The proof is completed by the maximum principle. The equation \eqref{2-3} is elliptic and at the maximum point of $G$, the left hand side of \eqref{2-3} is non-positive. But, under Condition \ref{condn}, we know the right hand side of \eqref{2-3} is non-negative. This means TERM I must be zero. By Lemma \ref{Lem 2.1}, we obtain $\lambda_1=\lambda_2=\cdots=\lambda_n$. By Newton-Maclaurin inequality, we know $G=\frac{\sigma_{1}^{k}}{\sigma_{k}}$ also reaches its minimum, therefore is a constant. So, $\lambda_1=\lambda_2=\cdots=\lambda_n$ is established everywhere on $M$ which implies $M$ is a round sphere.
	\end{proof}
	
\section{ For $F=\sum_{l=1}^{n}a_{l}\sigma_{l}$}
\label{sec:F non-homogeneous}

	In this section, for a non-homogeneous function $F=\sum_{l=1}^{n}a_{l}\sigma_{l}$, where $a_{l}$ is a nonnegative constant and $\sum_{l=2}^{n}a_{l}>0$, we choose  $G=\frac{\sigma_{1}^{n}}{\sigma_{n}}$ as the test function. We will analyze TERM I and TERM II as in the previous section.
	
	\begin{lem}
		For $F=\sum_{l=1}^{n}a_{l}\sigma_{l}$ and $G=\frac{\sigma_{1}^{n}}{\sigma_{n}}$, TERM I is non-negative. Moreover, it vanishes if $\lambda_1=\lambda_2=\cdots=\lambda_n$.
	\end{lem}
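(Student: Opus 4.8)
The plan is to follow the proof of Lemma \ref{Lem4.1} essentially line by line, replacing the homogeneity degree $k$ of $\sigma_k^{\alpha}$ by $n$ and the single positive multiplier $k\alpha-1$ by a nonnegative combination of the $\sigma_l$'s. First recall from Section \ref{sec:preparation} that
$$\text{TERM I}=\frac{\partial g}{\partial \lambda_{i}}\lambda_{i}\Bigl(1-\frac{\partial f}{\partial \lambda_{p}}\lambda_{p}^{2}\Bigr)+\frac{\partial g}{\partial \lambda_{i}}\lambda_{i}^{2}\Bigl(\frac{\partial f}{\partial \lambda_{p}}\lambda_{p}-f\Bigr).$$
Since $G=\sigma_{1}^{n}/\sigma_{n}$ is homogeneous of degree $0$, Euler's relation gives $\frac{\partial g}{\partial\lambda_{i}}\lambda_{i}=0$, so the first summand drops out, and it remains to show that the two scalar factors making up the second summand are each nonnegative on $\Gamma_{+}$.

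For the factor coming from $f$, since each $\sigma_{l}$ is homogeneous of degree $l$ we have $\frac{\partial f}{\partial\lambda_{p}}\lambda_{p}=\sum_{l=1}^{n}l\,a_{l}\sigma_{l}$, whence
$$\frac{\partial f}{\partial\lambda_{p}}\lambda_{p}-f=\sum_{l=1}^{n}(l-1)a_{l}\sigma_{l}(\lambda)=\sum_{l=2}^{n}(l-1)a_{l}\sigma_{l}(\lambda)\ge 0,$$
because $\lambda\in\Gamma_{+}$ forces every $\sigma_{l}(\lambda)>0$ while all $a_{l}\ge 0$; the hypothesis $\sum_{l\ge 2}a_{l}>0$ even makes it strictly positive. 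For the factor coming from $g$, I would repeat the computation in the proof of Lemma \ref{Lem4.1} with $k=n$: using \eqref{g1} and the identity $\sum_{i}\sigma_{n-1}(\lambda|i)\lambda_{i}^{2}=\sigma_{1}\sigma_{n}-(n+1)\sigma_{n+1}=\sigma_{1}\sigma_{n}$ (since $\sigma_{n+1}=0$), one obtains
$$\frac{\partial g}{\partial\lambda_{i}}\lambda_{i}^{2}=g\Bigl((n-1)\sigma_{1}-2n\frac{\sigma_{2}}{\sigma_{1}}+(n+1)\frac{\sigma_{n+1}}{\sigma_{n}}\Bigr)=g\Bigl((n-1)\sigma_{1}-2n\frac{\sigma_{2}}{\sigma_{1}}\Bigr),$$
which is $\ge 0$ by Corollary \ref{Cor 2.3} with $k=n$ (again using $\sigma_{n+1}=0$), since $g>0$ on $\Gamma_{+}$. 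Multiplying the two nonnegative factors gives $\text{TERM I}\ge 0$; and if $\lambda_{1}=\cdots=\lambda_{n}$, the bracket in the last display vanishes by the equality case of Corollary \ref{Cor 2.3}, so $\text{TERM I}=0$, which is the asserted ``moreover'' part.

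The only point requiring a word of care is conceptual rather than technical: the positive multiplier $k\alpha-1$ of the homogeneous case is here replaced by $\sum_{l=2}^{n}(l-1)a_{l}\sigma_{l}(\lambda)$, and its nonnegativity is exactly where the hypotheses $a_{l}\ge 0$, $\sum_{l\ge 2}a_{l}>0$ and strict convexity ($\lambda\in\Gamma_{+}$, hence $\sigma_{l}(\lambda)>0$) enter. Everything else is a direct transcription of Lemma \ref{Lem4.1} together with the Newton--MacLaurin-type inequality already established in Corollary \ref{Cor 2.3}, so no genuine obstacle is expected.
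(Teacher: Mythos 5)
Your proof is correct and is essentially the paper's own argument, spelled out in full: the paper's proof just notes that $G$ is degree-zero homogeneous and that $\frac{\partial f}{\partial\lambda_i}\lambda_i - f=\sum_{l=2}^{n}(l-1)a_l\sigma_l>0$, then refers back to Lemma \ref{Lem4.1}, which is exactly the computation of $\frac{\partial g}{\partial\lambda_i}\lambda_i^2$ and the appeal to Corollary \ref{Cor 2.3} with $k=n$ that you carry out.
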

	
	\begin{proof}
		Just need to notice that $G=\frac{\sigma_{1}^{n}}{\sigma_{n}}$ is homogeneous of degree $0$ and $\frac{\partial f}{\partial \lambda_{i}}\lambda_{i}-f=\sum_{l=2}^{n}(l-1)a_{l}\sigma_{l}>0$, the rest of the proof is similar to Lemma \ref{Lem4.1}.
	\end{proof}
	
	Now, we regard TERM II as an operator on $C^{\infty}(M)$, i.e., 
	$$\text{TERM II}=\Phi(f,g),$$ where $\Phi:C^{\infty}(M)\times C^{\infty}(M)\rightarrow C^{\infty}(M)$. Obviously, $\Phi(\sum_{l=1}^{n}a_{l}\sigma_{l},g)=\sum_{l=1}^{n}a_{l}\Phi(\sigma_{l},g)$. Now, we consider $\Phi(\sigma_{l},g)$.
	
	\begin{lem}
		For $G=\frac{\sigma_{1}^{n}}{\sigma_{n}}$, if $G$ attains its maximum at $x_{0}$, then, at $x_{0}$,
		\begin{align*}
		\Phi(\sigma_{l},g)&=g\Bigg(\sum_{i}\sum_{p\neq q}\Big(\frac{\sigma_{l-1}(\lambda|i)}{\lambda_{p}\lambda_{q}}+\frac{n\sigma_{l-2}(\lambda|pq)}{\sigma_{1}}-\frac{\sigma_{l-2}(\lambda|pq)}{\lambda_{i}})(h_{pqi}^{2}-h_{ppi}h_{qqi})\\
		&~+\sum_{i} \frac{n(n-1)\sigma_{l-1}(\lambda|i)}{\sigma_{1}^{2}}(\nabla_{i}\sigma_{1})^{2}).
		\end{align*}
	\end{lem}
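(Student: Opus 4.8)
The plan is to specialise the general expression for TERM II $=\Phi(f,g)$ recorded at the end of Section~\ref{sec:preparation} to $f=\sigma_l$ and $g=\frac{\sigma_1^n}{\sigma_n}$, and then simplify using the critical point condition at $x_0$. For $f=\sigma_l$ the derivative data are elementary: $\frac{\partial f}{\partial\lambda_p}=\sigma_{l-1}(\lambda|p)$, $\frac{\partial^2 f}{\partial\lambda_p\partial\lambda_q}=\sigma_{l-2}(\lambda|pq)$, which vanishes when $p=q$, and, from $\sigma_{l-1}(\lambda|p)=\sigma_{l-1}(\lambda|pq)+\lambda_q\sigma_{l-2}(\lambda|pq)$, one gets $\frac{\partial f/\partial\lambda_p-\partial f/\partial\lambda_q}{\lambda_p-\lambda_q}=-\sigma_{l-2}(\lambda|pq)$. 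For $g=\frac{\sigma_1^n}{\sigma_n}$ I would use \eqref{g1}, \eqref{g2} and \eqref{g-} with $k=n$, together with the identities valid on $\Gamma_+$, namely $\frac{\sigma_{n-1}(\lambda|p)}{\sigma_n}=\frac1{\lambda_p}$ and $\frac{\sigma_{n-2}(\lambda|pq)}{\sigma_n}=\frac1{\lambda_p\lambda_q}$ for $p\neq q$ (the diagonal value of $\sigma_{n-2}(\lambda|pq)$ still being $0$). Finally, applying Lemma~\ref{Lem 2.2}(i) to $\sigma_1$ and to $\sigma_n$ gives $\nabla_i\sigma_1=\sum_p h_{ppi}$ and $\frac{\nabla_i\sigma_n}{\sigma_n}=\sum_p\frac{h_{ppi}}{\lambda_p}$, and, since $\nabla_l G=0$ at $x_0$, one has $\sum_p\frac{h_{ppi}}{\lambda_p}=\frac{n}{\sigma_1}\nabla_i\sigma_1$ there.

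I would then handle the two sums in $\Phi(f,g)$ separately. The $h_{pqi}^2$-sum is the easy one: substituting $\frac{\partial g/\partial\lambda_p-\partial g/\partial\lambda_q}{\lambda_p-\lambda_q}=\frac{g}{\lambda_p\lambda_q}$, $\frac{\partial f}{\partial\lambda_i}=\sigma_{l-1}(\lambda|i)$, $\frac{\partial g}{\partial\lambda_i}=g\big(\frac{n}{\sigma_1}-\frac1{\lambda_i}\big)$ and $\frac{\partial f/\partial\lambda_p-\partial f/\partial\lambda_q}{\lambda_p-\lambda_q}=-\sigma_{l-2}(\lambda|pq)$, and using that the resulting coefficient is symmetric in $p,q$ so that $2\sum_{p<q}$ may be replaced by $\sum_{p\neq q}$, one finds that this sum equals
\[
g\sum_i\sum_{p\neq q}\Big(\tfrac{\sigma_{l-1}(\lambda|i)}{\lambda_p\lambda_q}+\tfrac{n\sigma_{l-2}(\lambda|pq)}{\sigma_1}-\tfrac{\sigma_{l-2}(\lambda|pq)}{\lambda_i}\Big)h_{pqi}^2 ,
\]
which is already the $h_{pqi}^2$ part of the claimed identity, with the correct coefficient.

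For the $h_{ppi}h_{qqi}$-sum, the piece $-\frac{\partial g}{\partial\lambda_i}\frac{\partial^2 f}{\partial\lambda_p\partial\lambda_q}h_{ppi}h_{qqi}$ equals $-g\sum_i\sum_{p\neq q}\big(\tfrac{n\sigma_{l-2}(\lambda|pq)}{\sigma_1}-\tfrac{\sigma_{l-2}(\lambda|pq)}{\lambda_i}\big)h_{ppi}h_{qqi}$ immediately, again because $\sigma_{l-2}(\lambda|pq)$ kills the diagonal. For the remaining piece $\frac{\partial f}{\partial\lambda_i}\frac{\partial^2 g}{\partial\lambda_p\partial\lambda_q}h_{ppi}h_{qqi}$ I would expand $\frac{\partial^2 g}{\partial\lambda_p\partial\lambda_q}$ by \eqref{g2}, substitute the $\Gamma_+$-identities above, and contract against $h_{ppi}h_{qqi}$ term by term; using $\sum_p h_{ppi}=\nabla_i\sigma_1$, $\sum_p\frac{h_{ppi}}{\lambda_p}=\frac n{\sigma_1}\nabla_i\sigma_1$ at $x_0$, and $\big(\sum_p\frac{h_{ppi}}{\lambda_p}\big)^2=\sum_p\frac{h_{ppi}^2}{\lambda_p^2}+\sum_{p\neq q}\frac{h_{ppi}h_{qqi}}{\lambda_p\lambda_q}$, all the $(\nabla_i\sigma_1)^2$-type contributions collapse and this piece reduces to $g\sum_i\sigma_{l-1}(\lambda|i)\sum_p\frac{h_{ppi}^2}{\lambda_p^2}-g\sum_i\frac{n\sigma_{l-1}(\lambda|i)}{\sigma_1^2}(\nabla_i\sigma_1)^2$. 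Rewriting $\sum_p\frac{h_{ppi}^2}{\lambda_p^2}=\frac{n^2}{\sigma_1^2}(\nabla_i\sigma_1)^2-\sum_{p\neq q}\frac{h_{ppi}h_{qqi}}{\lambda_p\lambda_q}$ turns it into $-g\sum_i\sum_{p\neq q}\frac{\sigma_{l-1}(\lambda|i)}{\lambda_p\lambda_q}h_{ppi}h_{qqi}+g\sum_i\frac{n(n-1)\sigma_{l-1}(\lambda|i)}{\sigma_1^2}(\nabla_i\sigma_1)^2$, the gradient coefficients combining as $\frac{n^2}{\sigma_1^2}-\frac n{\sigma_1^2}=\frac{n(n-1)}{\sigma_1^2}$. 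Adding the three pieces, the $h_{ppi}h_{qqi}$-terms assemble with coefficient $-\big(\frac{\sigma_{l-1}(\lambda|i)}{\lambda_p\lambda_q}+\frac{n\sigma_{l-2}(\lambda|pq)}{\sigma_1}-\frac{\sigma_{l-2}(\lambda|pq)}{\lambda_i}\big)$, which together with the $h_{pqi}^2$ part computed above and the $(\nabla_i\sigma_1)^2$ remainder is exactly the asserted formula.

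The computation is conceptually routine; the main thing to watch — and hence the only real obstacle — is the bookkeeping in the middle step: one must distinguish sums over all $p,q$ from sums over $p\neq q$ (the diagonal contributions of $\sigma_{l-2}(\lambda|pq)$ and of $\sigma_{n-2}(\lambda|pq)/\sigma_n$ vanish, whereas those of $1/(\lambda_p\lambda_q)$ do not), and invoke $\nabla_l G=0$ at $x_0$ precisely at the point where $\sum_p h_{ppi}/\lambda_p$ is replaced by $\frac n{\sigma_1}\nabla_i\sigma_1$. As a sanity check, for $l=n$ one has $\frac{\sigma_{n-1}(\lambda|i)}{\lambda_p\lambda_q}=\frac{\sigma_{n-2}(\lambda|pq)}{\lambda_i}=\frac{\sigma_n}{\lambda_i\lambda_p\lambda_q}$, so the coefficient collapses to $\frac{n\sigma_{n-2}(\lambda|pq)}{\sigma_1}$ and the formula agrees with the $\alpha=1$, $k=n$ instance of the corresponding lemma in Section~\ref{sec:sigma_k}.
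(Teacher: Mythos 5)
Your proposal is correct and follows essentially the same route as the paper: specialize the general TERM II expression from Section \ref{sec:preparation} to $f=\sigma_l$ and $g=\sigma_1^n/\sigma_n$ using the identities $\sigma_{n-1}(\lambda|p)/\sigma_n=1/\lambda_p$ and $\sigma_{n-2}(\lambda|pq)/\sigma_n=1/(\lambda_p\lambda_q)$, then invoke $\nabla G=0$ at $x_0$ to replace $\sum_p h_{ppi}/\lambda_p$ by $\tfrac{n}{\sigma_1}\nabla_i\sigma_1$, exactly as in the paper's computation of $\tfrac{\partial^2 g}{\partial\lambda_p\partial\lambda_q}h_{ppi}h_{qqi}$. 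Your detour through $\sum_p h_{ppi}^2/\lambda_p^2$ is slightly more roundabout than the paper's direct contraction but lands on the same identity, and your $l=n$ consistency check against the Section \ref{sec:sigma_k} lemma is a nice confirmation.
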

	
	\begin{proof}
		By \eqref{g1} and \eqref{g-}, we have
		\begin{equation*}
		\frac{\partial g}{\partial\lambda_{p}}=g\left(\frac{n}{\sigma_{1}}-\frac{1}{\lambda_{p}}\right)
		\end{equation*}
		and
		\begin{equation*}
		\frac{\frac{\partial g}{\partial \lambda_{p}}-\frac{\partial g}{\partial \lambda_{q}}}{\lambda_{p}-\lambda_{q}}=\frac{g}{\lambda_{p}\lambda_{q}}.
		\end{equation*}
		
		Since $G$ attains its maximum at $x_{0}$, $\nabla_{l}G=0$, which implies
		$\displaystyle\frac{n\nabla_{l}\sigma_{1}}{\sigma_{1}}=\frac{\nabla_{l}\sigma_{n}}{\sigma_{n}}$ at $x_{0}$.
		Thus, at $x_{0}$, $$\frac{\partial^{2} g}{\partial\lambda_{p}\partial\lambda_{q}}h_{ppi}h_{qqi}=g\Big(\sum_{i}\frac{n(n-1)(\nabla_{i}\sigma_{1})^{2}}{\sigma_{1}^{2}}-\sum_{i}\sum_{p\neq q}\frac{1}{\lambda_{p}\lambda_{q}}h_{ppi}h_{qqi}\Big).$$
		
		Furthermore, we obtain
		\begin{align*}
			\Phi(\sigma_{l},g)&=g\Bigg(\sum_{i}\sum_{p\neq q}\Big(\frac{\sigma_{l-1}(\lambda|i)}{\lambda_{p}\lambda_{q}}+\frac{n\sigma_{l-2}(\lambda|pq)}{\sigma_{1}}-\frac{\sigma_{l-2}(\lambda|pq)}{\lambda_{i}}\Big)(h_{pqi}^{2}-h_{ppi}h_{qqi})\\
			&~+\sum_{i}\frac{n(n-1)\sigma_{l-1}(\lambda|i)}{\sigma_{1}^{2}}(\nabla_{i}\sigma_{1})^{2}\Bigg).
		\end{align*}
	\end{proof}
	
	For convenience, let 
	$$A_{ijp}=\frac{\sigma_{l-1}(\lambda|i)}{\lambda_{p}\lambda_{q}}+\frac{n\sigma_{l-2}(\lambda|pq)}{\sigma_{1}}-\frac{\sigma_{l-2}(\lambda|pq)}{\lambda_{i}}$$ 
	and 
$$B_{p}=\frac{n(n-1)}{\sigma_{1}^{2}}\sigma_{l-1}(\lambda|p).$$ 
Then,
	$$\Phi(\sigma_{l},g)=g\big(\sum_{i\neq j}\sum_p A_{ijp}(h_{ijp}^{2}-h_{iip}h_{jjp})+\sum_{p}B_{p}(\nabla_{p}\sigma_{1})^{2}\big).$$
	
	\begin{lem}\label{lem5.3}
		Let $M$ be a strictly convex hypersurface in $\mathbb{R}^{n+1}$ satisfying the condition $\lambda_{min}\geq \theta(l,n)\lambda_{max}$, where $0<\theta(l,n)\leq 1$ is a constant depending on $l$ and $n$. For $G=\frac{\sigma_{1}^{n}}{\sigma_{n}}$, if $G$ attains its maximum at $x_{0}$, then, at $x_{0}$, $\Phi(\sigma_{l},g)$ is non-negative.
	\end{lem}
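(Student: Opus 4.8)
The plan is to reduce the nonnegativity of $\Phi(\sigma_l,g)$ to a polynomial inequality in the components $h_{iip}$ and the off-diagonal $h_{ijp}$ ($i,j,p$ pairwise distinct), exactly as in the proof of Lemma \ref{Lem 4.3}. First I would organize the double sum $\sum_{i\neq j}\sum_p A_{ijp}(h_{ijp}^2-h_{iip}h_{jjp})+\sum_p B_p(\nabla_p\sigma_1)^2$ by separating the terms according to whether the three indices $i,j,p$ coincide in pairs or are all distinct; expanding $\nabla_p\sigma_1=\sum_i h_{iip}$ the way it is done before Lemma \ref{Lem 4.3}, one is left with a quadratic form in the $h_{iii}$, the $h_{iij}$ ($i\neq j$), and the fully-off-diagonal $h_{ijp}$. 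The coefficient of each $h_{ijp}^2$ with $i,j,p$ distinct is just $A_{ijp}$, which under $\lambda\in\Gamma_+$ is manifestly positive (each of the three summands in $A_{ijp}$ is nonnegative, being a ratio of elementary symmetric functions of positive reals), so those terms need no work. The content is in controlling the cross terms $h_{iii}h_{jji}$ and $h_{iip}h_{jjp}$.

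For the cross terms I would use the same Cauchy--Schwarz splitting: write $2(-A\,{+}\,B)h_{iii}h_{jji}\ge -a\,h_{iii}^2-b\,h_{jji}^2$ with $ab=(B_i-A_{iji})^2$, and $(-A+B)h_{iip}h_{jjp}\ge -c\,h_{iip}^2-d\,h_{jjp}^2$ with $4cd=(B_p-A_{ijp})^2$, then collect the coefficients of $h_{iii}^2$ and $h_{iij}^2$. The key positivity input is that $B_p>0$ on $\Gamma_+$, which is immediate here (unlike in Condition \ref{condn}, where it had to be assumed), so one may take $a_{ij}=\frac1{n-1}B_i$ and $c_{ipj}=d_{pij}=\frac12|B_j-A_{ipj}|$ as before. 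The problem then collapses to verifying, for each pair $i\neq j$,
\begin{equation*}
2A_{ijj}+B_j\;\ge\;\frac{(n-1)(B_j-A_{ijj})^2}{B_j}+\sum_{p\neq i,\,p\neq j}\bigl|B_j-A_{ipj}\bigr|,
\end{equation*}
together with the trivially satisfied $B_i-\sum_{j\neq i}a_{ij}=0$. Dividing by $B_j$, this is a statement about the ratios $A_{ipj}/B_j$, and the plan is to show these ratios all lie in a small interval $[\,1-\varepsilon,\,1+\varepsilon\,]$ whenever $\lambda_{\min}\ge\theta(l,n)\lambda_{\max}$ with $\theta(l,n)$ close enough to $1$; a continuity/compactness argument suffices, since at $\lambda_1=\cdots=\lambda_n$ a direct computation with the identities $\sigma_{l-1}(\lambda|i)$, $\sigma_{l-2}(\lambda|pq)$ evaluated on the constant vector gives $A_{ipj}/B_j=$ a fixed value depending only on $l,n$ (and $B_j\neq0$), so the inequality holds strictly there and hence on a neighborhood.

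The main obstacle I anticipate is twofold. First, computing the value of $A_{ipj}/B_j$ at the umbilic point and checking that the required inequality is \emph{strict} there: $A_{ipj}$ has three pieces with different denominators ($\lambda_p\lambda_q$ versus $\sigma_1$ versus $\lambda_i$), so one must be careful that these combine to something comparable to $B_j$ in the right way for every $l$ from $2$ to $n$, including the extreme case $l=2$ (where $\sigma_{l-2}=\sigma_0=1$) and $l=n$ (where several $\sigma_{l-1}(\lambda|i)$-type quantities become small). Second, once strictness at the umbilic point is established, one needs an \emph{explicit} or at least effectively-computable $\theta(l,n)<1$, not merely an existence statement, if the theorem is to be quantitative; this requires bounding how fast the ratios $A_{ipj}/B_j$ can drift away from their umbilic values in terms of $\lambda_{\max}/\lambda_{\min}$, which is a routine but slightly tedious estimation of ratios of elementary symmetric polynomials. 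I would handle this by writing each $\sigma_m(\lambda|\cdot)$ as $\sigma_m$ of $n-1$ or $n-2$ variables all lying in $[\theta\lambda_{\max},\lambda_{\max}]$ and using the standard two-sided bounds $\binom{r}{m}(\theta\lambda_{\max})^m\le\sigma_m\le\binom{r}{m}\lambda_{\max}^m$ to pin the ratios; the dependence of the resulting $\theta(l,n)$ on $n$ then feeds into the constant $\Theta$ of Theorem \ref{Thm1.7} after taking $\Theta=\min_{2\le l\le n}\theta(l,n)$ (only the $l$ with $a_l>0$ matter, but taking the min over all $l\ge2$ is safe).
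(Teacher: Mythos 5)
Your proposal follows essentially the same route as the paper: the same regrouping of the quadratic form into coefficients of $h_{iii}^2$, $h_{iij}^2$ and the fully off-diagonal $h_{ijp}^2$, the same Cauchy--Schwarz splitting with $a_{ij}=\frac{1}{n-1}B_i$ and $c_{ipj}=d_{pij}=\frac{1}{2}\lvert B_j-A_{ipj}\rvert$, the same reduction to a ratio inequality about $A_{ijp}/B_q$ near its umbilic value $\frac{n}{n-1}$, and the same plan of extracting an explicit $\theta(l,n)$ from two-sided binomial bounds on the elementary symmetric functions (the paper makes the neighborhood explicit by solving $3\ge(n-1)\delta^2+(n-2)\delta$ rather than invoking continuity/compactness). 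One small correction: $A_{ijp}$ is \emph{not} term-by-term nonnegative on all of $\Gamma_+$ --- its contribution $\sigma_{l-2}(\lambda|pq)\bigl(\frac{n}{\sigma_1}-\frac{1}{\lambda_i}\bigr)$ is negative whenever $\lambda_i<\sigma_1/n$ --- so the positivity of the coefficients of the $h_{ijp}^2$ with pairwise distinct indices should instead be deduced from the pinching bound $A_{ijp}/B_q>1$ together with $B_q>0$, exactly as for the other coefficients.
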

	
	\begin{proof}
		Similar to the proof of Lemma \ref{Lem 4.3}, we obtain
		\begin{align*}
			&~\sum_{i\neq j} \sum_p A_{ijp}(h_{ijp}^{2}-h_{iip}h_{jjp})+\sum_{p}B_{p}(\nabla_{p}\sigma_{1})^{2}\\
			&=\sum_{i}B_{i}h_{iii}^{2}+\sum_{i\neq j}(2A_{iji}+B_{j})h_{iij}^{2}+\sum_{\neq}A_{ijp}h_{ijp}^{2}+2\sum_{i\neq j}(B_{i}-A_{iji})h_{iii}h_{jji}\\
			&~+\sum_{\neq}(B_{p}-A_{ijp})h_{iip}h_{jjp}.
		\end{align*}
		
		We first estimate the lower bounds of the last two terms. As the previous section,
		
		\begin{equation*}
		2\left(B_{i}-A_{iji} \right) h_{iii}h_{jji}
		\geq -a_{ij}h_{iii}^{2}-b_{ij}h_{jji}^{2}\, ,
		\end{equation*}
		where $a_{ij}>0, b_{ij}>0$ are constants satisfying
		\begin{equation}\label{ab_5}
		a_{ij} b_{ij}=\left(B_{i}-A_{iji} \right)^{2}.
		\end{equation}
		
		And
		\begin{equation*}
		\left(B_{p}-A_{ijp} \right)h_{iip}h_{jjp}
		\geq -c_{ijp}h_{iip}^{2}-d_{ijp}h_{jjp}^{2}\, ,
		\end{equation*}
		where $c_{ijp}>0$, $d_{ijp}>0$ are constants satisfying
		\begin{equation}\label{cd_5}
		4c_{ijp}d_{ijp}=\left(B_{p}-A_{ijp} \right)^{2},
		\end{equation}
		and $c_{ijp}=c_{jip}$, $d_{ijp}=d_{jip}$.
		
		Then, we obtain
		\begin{align*}
		&~\sum_{i\neq j}\sum_p A_{ijp} (h_{ijp}^{2}-h_{iip}h_{jjp})+\sum_{p}B_{p}(\nabla_{p}\sigma_{1})^{2}\\
		&\geq \sum_{i}\left(B_{i}-\sum_{j\neq i}a_{ij} \right) h_{iii}^{2}+\sum_{i\neq j}\Big(2A_{iji}+B_{j}-b_{ji}-\sum_{p\neq i,p\neq j}(c_{ipj}+d_{pij}) \Big)h_{iij}^{2}\\
		&~+\sum_{\neq}A_{ijp}h_{ijp}^{2}.
		\end{align*}
		
		We can choose $a_{ij}=\frac{1}{n-1}B_{i}$. Then, From (\ref{ab_5}), we have 
		$$
		b_{ij}=\left(-A_{iji}+B_{i} \right)^{2}a_{ij}^{-1}=\frac{(n-1)\left(-A_{iji}+B_{i} \right)^{2}}{B_{i}}.
		$$
		
		And, we can choose $c_{ijp}=d_{ijp}$, because $h_{iip}$ and $h_{jjp}$ are the same type of terms.
		Furthermore, from $(\ref{cd_5})$, we can take
		$$c_{ipj}=d_{pij}=\frac{1}{2}|-A_{ipj}+B_{j}|.$$
		
		Then, we just need 
		\begin{equation*}
		2A_{iji}+B_{j}\geq \frac{(n-1)\left(-A_{iji}+B_{j}\right)^{2}}{B_{j}}
		+\sum_{p\neq i, p\neq j} |-A_{ipj}+B_{j}|.
		\end{equation*}
		For $B_{j}>0$, the above inequality is equivalent to
		\begin{equation}\label{last_5}
		\frac{2A_{iji}}{B_{j}}+1\geq (n-1)\left(-\frac{A_{iji}}{B_{j}}+1\right)^{2}
		+\sum_{p\neq i, p\neq j}|-\frac{A_{ipj}}{B_{j}}+1|.
		\end{equation}
		
		Notice that $\frac{A_{ijp}}{B_{q}}=\frac{n}{n-1}$ at umbilical points of $M$ for any $1\leq i, j, p, q\leq n$ .
		Thus we can assume 
		\begin{equation}\label{delta}
		1<\frac{A_{ijp}}{B_{q}}<1+\delta.
		\end{equation}
		Then, by solving $$ 3\geq (n-1)\delta^{2}+(n-2)\delta, $$ we can choose $\delta=\frac{\sqrt{n^{2}+8n-8}+2-n}{2(n-1)}$ such that (\ref{last_5}) holds.
		By direct calculation, we can choose
		\begin{equation*}
		\theta(l,n) = \left\{ 
		\begin{aligned}
		\noindent \max(\sqrt{\frac{n-1}{n}},\sqrt{\frac{n}{(n-1)(1+\delta)}}),\qquad\qquad\qquad\qquad\qquad\qquad &\text{for}~ l=1,\\
		\max(\left(\frac{\frac{n-1}{n}C_{n-1}^{l-1}+C_{n-2}^{l-2}}{C_{n-1}^{l-1}+C_{n-2}^{l-2}}\right)^{\frac{1}{l+1}},
		\left(\frac{C_{n-1}^{l-1}+C_{n-2}^{l-2}}{\frac{(1+\delta)(n-1)}{n}C_{n-1}^{l-1}+C_{n-2}^{l-2}} \right)^{\frac{1}{l+1}}),&\text{for}~ l=2,...,n,
		\end{aligned}
		\right.
		\end{equation*}
		such that under the condition $\lambda_{min}>\theta(l,n)\lambda_{max}$, (\ref{delta}) holds,
			where $C_n^k=\frac{n!}{k!(n-k)!}$.
		Thus the proof is completed.	
	\end{proof}
		
	\begin{proof}[Proof of Theorem \ref{Thm1.7}]
		Now, let $\displaystyle\Theta(n)=\max_{l=1,...,n}\theta(n,l)$. Then, under condition $\lambda_{min}>\Theta\lambda_{max}$, $\Phi(\sigma_{l},g)$ is non-negative for all $l$. Therefore, TERM II is non-negative under the condition. Similarly, by the maximum principle we complete the proof.
	\end{proof}

\section{Proof of Theorem \ref{inthm}}
\label{sec:other}

	\begin{proof}[Proof of Theorem \ref{inthm}]
		By Minkowski identity, we have
		$$k\int_{M}\sigma_{k}\langle X,e_{n+1} \rangle d\mu+(n-k+1)\int_{M}\sigma_{k-1}d\mu=0.$$
		By \eqref{1-1}, we have
		\begin{equation}\label{minkv}
		0=\int_{M}k\sigma_{k}\left(-F+\frac{(n-k+1)\sigma_{k-1}}{k\sigma_{k}}\right)d\mu.
		\end{equation}
		Since $\sigma_{k}>0$ and $-F+\frac{(n-k+1)\sigma_{k-1}}{k\sigma_{k}}$ is non-negative or non-positive, we know $F=\frac{(n-k+1)\sigma_{k-1}}{k\sigma_{k}}$.
		Notice that \eqref{minkv} also holds for $k-1$. Combining
		 Newton-MacLaurin inequalities, we have
		\begin{align*}
		0&=\int_{M}(k-1)\sigma_{k-1}\left(-F+\frac{(n-k+2)\sigma_{k-2}}{(k-1)\sigma_{k-1}}\right)d\mu\\
		&=\int_{M}(k-1)\sigma_{k-1}\left(-\frac{(n-k+1)\sigma_{k-1}}{k\sigma_{k}}+\frac{(n-k+2)\sigma_{k-2}}{(k-1)\sigma_{k-1}}\right)d\mu\leq 0.
		\end{align*}
		This implies
		$$\frac{(n-k+1)\sigma_{k-1}}{k\sigma_{k}}=\frac{(n-k+2)\sigma_{k-2}}{(k-1)\sigma_{k-1}}$$ on $M$. Thus, we have $\lambda_{1}=\lambda_{2}=\cdots=\lambda_{n}$ for every point of $M$ which means $M$ is a round sphere.
	\end{proof}
	

	\begin{acknow}
		The authors would like to thank Professor Xinan Ma for his nice lectures on $\sigma_k$-problems delivered in Tsinghua University in January 2016.
		They would also thank Professor Haizhong Li for his valuable comments.
		\end{acknow}


\end{document}